\definecolor{darkblue}{rgb}{0,0,0.4}
\DeclareMathOperator*{\mapsmo}{\maps}
\newtheorem{theorem}{Theorem}[section] 
\newtheorem{lemma}[theorem]{Lemma}     
\newtheorem{corollary}[theorem]{Corollary}
\newtheorem{proposition}[theorem]{Proposition}
\newtheorem{remark}[theorem]{Remark}
\newtheorem{definition}[theorem]{Definition}
\newcommand{\rk}{\mathrm{Rank}}
\newcommand{\bbC}{\mathbb{C}}
\newcommand{\C}{\mathbb{C}}
\newcommand{\bbN}{\mathbb{N}}
\newcommand{\R}{\mathbb{R}}
\newcommand{\bbR}{\mathbb{R}}
\newcommand{\Z}{\mathbb{Z}}
\newcommand{\bbZ}{\mathbb{Z}}
\newcommand{\bbQ}{\mathbb{Q}}
\newcommand{\ignore}[1]{}
\newcommand{\G}{\mathcal{G}}
\newcommand{\mH}{\mathcal{H}}
\newcommand{\mcH}{\mathcal{H}} 
\newcommand{\mcP}{\mathcal{P}}
\newcommand{\T}{\mathcal{T}}
\newcommand{\leqs}{\leqslant}
\newcommand{\geqs}{\geqslant}
\newcommand{\heq}{\simeq}
\newcommand{\maps}{\longrightarrow}
\newcommand{\lmaps}{\longleftarrow}
\newcommand{\injects}{\hookrightarrow}
\newcommand{\homeo}{\cong}
\newcommand{\isom}{\cong}
\newcommand{\cross}{\times}
\newcommand{\wt}[1]{\widetilde{#1}} 
\newcommand{\fc}{\mathcal{A}_{\mathrm{flat}}} 
\newcommand{\Ch}{\mathrm{Ch}}
\newcommand{\Hom}{\mathrm{Hom}}
\newcommand{\GL}{\mathrm{GL}}
\newcommand{\hofib}{\mathrm{hofib}}
\newcommand{\Map}{\mathrm{Map}}
\newcommand{\bMap}{\mathrm{Map_*}}
\newcommand{\flatc}{\mathcal{A}_{\mathrm{flat}}}
\newcommand{\Id}{\mathrm{Id}}
\newcommand{\xmaps}{\xrightarrow}
\newcommand{\srm}[1]{\stackrel{#1}{\maps}}
\newcommand{\srt}[1]{\stackrel{#1}{\to}}
\newcommand{\sm}{\wedge}
\newcommand{\goesto}{\mapsto}
\newcommand{\nd}{\noindent}
\newcommand{\cc}{\Box}
\def\co{\colon\thinspace}
\newcommand{\Span}{\mathrm{Span}}
\newcommand{\Img}{\mathrm{Im}}
\newcommand{\e}{\emph}
 \newcommand{\ra}{\rangle}
\newcommand{\la}{\langle}
\newcommand{\coker}{\mathrm{coker}}
\title[Maps into algebraic sets and spaces of flat connections]
 {Smoothing maps into algebraic sets and spaces of flat connections} 
\author{Thomas Baird and Daniel A. Ramras}
\thanks{The first author was partly supported by an NSERC discovery grant. The second author was partially supported by NSF grants DMS-0804553 and DMS-0968766 and a Collaboration Grant from the Simons Foundation.\\
2010 \e{Mathematics Subject Classification}.  Primary 14P05, 53C05; Secondary 57R20, 55R37}
\begin{document}

\begin{abstract}
Let $X\subset \bbR^n$ be a real algebraic set and $M$ a smooth, closed manifold.  We show that all continuous maps $M\to X$ are homotopic (in $X$) to $C^\infty$ maps.  
We apply this result to study characteristic classes of vector bundles associated to continuous families of complex group representations, and we establish lower bounds on the ranks of the homotopy groups of spaces of flat connections over aspherical manifolds.
\end{abstract}

\maketitle

\section {Introduction}

The first goal of this paper is to prove the following result about the differential topology of algebraic sets.

\begin{theorem}[Section~\ref{smoothing-sec}]\label{smoothing theorem}  Let $X\subset \bbR^n$ be a (possibly singular) real algebraic set, and let $f\co M\to X$ be a continuous map from a smooth, closed manifold $M$. Then there exists a map $g\co M\to X$, and a homotopy $H\co M\cross I \to X$ connecting $f$ and $g$, such that the composite $M\srt{g} X\injects \bbR^n$ is $C^\infty$.
\end{theorem}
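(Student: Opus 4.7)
My plan is to reduce to the smooth case by combining $C^\infty$ approximation in the ambient $\bbR^n$ with a stratified retraction onto $X$. Although $X$ is singular as a subset of $\bbR^n$, every real algebraic set admits a Whitney stratification into smooth semi-algebraic pieces, and such stratifications carry a Thom--Mather system of tubular neighborhoods and retractions designed precisely to smoothly project a point of $\bbR^n$ lying near $X$ back onto $X$ itself.

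Concretely, I would first fix a Whitney stratification $X = \bigsqcup_\alpha S_\alpha$ in which each $S_\alpha$ is a smooth semi-algebraic submanifold of $\bbR^n$, equipped with a compatible system of tubes $T_\alpha \subset \bbR^n$, smooth retractions $\pi_\alpha\co T_\alpha \to S_\alpha$, and tubular distance functions $\rho_\alpha\co T_\alpha \to [0,\infty)$ satisfying the Thom--Mather compatibility relations $\pi_\alpha \circ \pi_\beta = \pi_\alpha$ and $\rho_\alpha \circ \pi_\beta = \rho_\alpha$ whenever $S_\alpha \subset \overline{S_\beta}$. Next, by Whitney's approximation theorem, I would produce a smooth map $\tilde f\co M \to \bbR^n$ that is $C^0$-arbitrarily close to $f$. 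Taking the approximation close enough (using compactness of $M$) ensures that $\tilde f(M) \subset \bigcup_\alpha T_\alpha$ and that the straight-line homotopy $(1-t)f + t\tilde f$ also lands in this union. I would then build $g$ by applying the retractions $\pi_\alpha$ to $\tilde f$ in a carefully orchestrated order, using a smooth partition of unity on $M$ adapted to the induced stratification of $\tilde f(M)$, and define the homotopy $H$ by concatenating the linear homotopy with a homotopy that gradually applies the tubular projections.

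The main obstacle is ensuring that the resulting $g$ is genuinely $C^\infty$ at points of $M$ whose image lies near the common boundary of several strata, since the individual retractions $\pi_\alpha$ do not naively glue together into a smooth map. This is precisely what the Thom--Mather compatibility relations are designed to handle, but applying them still requires an induction on the depth of the stratification: one first projects onto the lowest-dimensional stratum involved near a given point of $M$, then uses the compatibility relations to extend this projection to higher strata while preserving smoothness. An alternative worth mentioning would be to invoke resolution of singularities to replace $X$ by a smooth algebraic variety $\tilde X$ with a proper map $\tilde X \to X$; however, the obstruction to lifting the given continuous $f$ through this proper map (whose real fibres need not be contractible) seems less tractable than handling the singularities directly via Thom--Mather tubes.
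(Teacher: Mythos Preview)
Your proposal diverges from the paper and, as written, contains a genuine gap.  The Thom--Mather control data does give smooth submersions $\pi_\alpha\co T_\alpha\to S_\alpha$ satisfying the commutation relations you quote, but those relations are designed to produce a \emph{continuous} retraction of a neighborhood onto $X$, not a $C^\infty$ one; this is exactly why Thom's isotopy lemmas yield homeomorphisms rather than diffeomorphisms.  Concretely, the assembled retraction is built from controlled vector fields that are only smooth along each stratum, and the passage from one stratum's retraction to the next is governed by the depth filtration, not by any smooth transition.  Your suggestion of a partition of unity on $M$ cannot repair this: $X$ is not convex in $\bbR^n$, so a nontrivial convex combination $\sum \lambda_\alpha\,\pi_\alpha(\tilde f(m))$ of points lying in distinct strata will generically leave $X$ altogether.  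The inductive scheme you sketch (``project to the lowest stratum first, then extend using compatibility'') is the Mather construction itself, and it yields only a $C^0$ map into $\bbR^n$.

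The paper proceeds by the route you dismiss: it embeds $X$ as the singular locus of an irreducible variety $Y\cong\bbR^{n+1}$, applies Hironaka to obtain $\Pi\co\wt Y\to Y$ with $\wt X=\Pi^{-1}(X)$ a simple normal crossing divisor, and then proves the smoothing result directly for NCDs.  The lifting problem you worry about is handled by a short trick: since $\Pi$ is an isomorphism over $Y\setminus X$, one perturbs $f$ by a small constant vector into $Y\setminus X$ (possible because $Y\cong\bbR^{n+1}$), lifts there for free, and then uses Durfee-type rug-function neighborhoods to retract back.  The NCD case is where the real work happens, and there the strata are codimension-one hypersurfaces meeting transversally, so one can build genuinely smooth self-maps $H_i\co \wt Y\to \wt Y$ collapsing a tube about each $M_i$ onto $M_i$ while preserving the other $M_j$; the composite $H_1\circ\cdots\circ H_N$ is then smooth by construction.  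This compatibility (Lemma~\ref{lcabolecruba} in the paper) is special to the NCD geometry and is precisely what fails for a general Whitney stratification.
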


The problem of smoothing maps into algebraic sets seems natural, but we have not found mention of it in the literature. We consulted several experts in real algebraic geometry; some expected our result to hold, and some did not.\footnote{The question of whether a continuous map from a compact smooth variety $X$ into a variety $Y$ is homotopic to a \e{regular} map has been considered by several authors, and in general the answer is negative~\cite{Bochnak-Kucharz, Ghiloni, Kucharz-Maciejewski}.}

Our proof proceeds by embedding $X$ as the singular set of an irreducible, quasi-projective variety $Y$ and using a resolution of singularities $\wt{Y}\to Y$ for which the inverse image of $X$ is a divisor with normal crossing singularities.  Basic facts about neighborhoods of algebraic sets then reduce the problem to the case of normal crossing divisors, which can be handled by differential-geometric means.  We note that a small modification of our arguments can prove a version of this result for smoothing out maps from non-compact manifolds into compact algebraic sets.

In the remainder of the paper, we apply Theorem \ref{smoothing theorem} to study the topology of  spaces of flat connections over aspherical manifolds and their associated moduli spaces. In particular, given a discrete group $\Gamma$ such that  $B\Gamma$ has the homotopy type of a finite CW complex, we study the functorial map 
\begin{equation}\label{intial}
\Hom(\Gamma, GL_n(\C)) \srm{B} \Map_* (B\Gamma, BGL_n(\C)),
\end{equation}
from the space of complex-valued group homomorphisms to the space of based continuous maps between classifying spaces. If $B\Gamma$ is homotopy equivalent to a smooth manifold, then (\ref{intial}) may be interpreted as the inclusion of the moduli space of based flat connections into the moduli space of based connections. We prove 

\begin{theorem}[Section~\ref{TAS}]$\label{cokernel-AS}$ Let $\Gamma$ be a discrete group such that $B\Gamma$ has the homotopy type of a finite, $d$--dimensional CW complex and let $\beta_k (B\Gamma) = \rk (H^k (B\Gamma; \bbQ))$.  Then for each $\rho\in \Hom(\Gamma, \GL_n (\bbC))$ and each $m\geqs 0$, the induced map on homotopy groups
$$B_*: \pi_m \left(\Hom(\Gamma, \GL_n (\bbC)), \rho\right) \to \pi_m \left(\bMap (B\Gamma, B\GL_n (\bbC)), B\rho\right)$$
satisfies $\rk (\coker (B_*)) \geqs \sum_{i = 1}^\infty \beta_{m+2i} (B\Gamma)$
so long as  $n\geqs (m+d)/2$. 
\end{theorem}

The strategy behind the proof of Theorem \ref{cokernel-AS} is as follows. The space $BGL_n(\C)$ classifies $\C^n$-vector bundles, so an element in $\pi_m \left(\bMap (B\Gamma, B\GL_n (\bbC)), B\rho\right)$ determines a vector bundle (up to isomorphism) over $S^m \cross B\Gamma$.  Those classes lying in the image of $B_*$ correspond to vector bundles over $S^m \cross B\Gamma$ which admit a connection that is ``flat in the $B\Gamma$ direction."  We then deduce, using Chern-Weil theory, that certain Chern classes of these bundles must vanish (see Theorem \ref{vanishing2}). Theorem \ref{smoothing theorem} is necessary to make this argument rigorous:
to construct the desired connection on the bundle associated to $B_* ([\psi])$, we must smooth out the underlying map $\psi$ from $S^m$ into the (usually singular) algebraic set $\Hom(\Gamma, \GL_n (\bbC))$.

In Section \ref{flat}, we prove the following consequence of Theorem \ref{cokernel-AS} (recall that $U(n)$ is the maximal compact subgroup of $GL_n(\C)$).

\begin{corollary}[Section~\ref{flat}]$\label{fh}$
Let $M$ be a closed, smooth, aspherical manifold of dimension $d$, and let $E\to M$ be a flat principal $U(n)$--bundle over $M$.  Let $\fc(E)$ denote the space of flat, unitary connections on $E$. Then for each $A_0 \in \fc (E)$ and for each $0<m \leqs 2n-d-1$, 
we have 
$$\rk\left(\pi_m (\fc(E), A_0)\right) \geqs \sum_{i=1}^\infty \beta_{m+2i+1} (M).$$
Additionally, if $\sum_{i=1}^\infty \beta_{2i+1} (M) > 0$ and $2n-d-1\geqs 0$, then $\fc (E)$ has infinitely many path components.
\end{corollary}

In particular,  the space of flat connections on a flat $U(n)$--bundle ($n\geqs 2$) over an orientable aspherical 3--manifold has infinitely many path components.  This result appears to be new, even for the trivial bundle on the 3--torus, and is in contrast with  the case of bundles over surfaces.  There, Yang--Mills theory shows that $\flatc(E)$ is highly connected with respect to the rank of $E$ (see Ramras~\cite[Proposition 4.9]{Ramras-surface}, for instance).  One fundamental difficulty in proving Corollary~\ref{fh} is that the space of flat connections is not smooth (even in an infinite-dimensional sense) and hence it is unclear whether paths in this space are always homotopic to smooth paths.

The methods developed in this paper (Theorem~\ref{cokernel-AS} in particular) also have applications to the study of Quillen--Lichtenbaum phenomena in the stable topology of representation spaces, as observed for products of surface groups in Ramras~\cite{Ramras-stable-moduli}.  In fact, the desire to understand these phenomena gave rise to the present work.  These topics will be discussed in detail elsewhere.

\vspace{.1in} 

\noindent {\bf Organization:} In Section~\ref{smoothing-sec}, we establish our results about maps into real algebraic sets, which are applied to study the characteristic classes in Section~\ref{char-classes}.  Theorem~\ref{cokernel-AS} is proven in Section~\ref{TAS}, and spaces of flat connections are studied in Section~\ref{flat-sec}.
 
\vspace{.1in} 

\noindent {\bf Acknowledgements}\label{ackref}
The first author thanks Juan Souto and Frances Kirwan for helpful conversations.  The second author thanks Ben Wieland.  In addition, the suggestions of the anonymous referees helped to improve the exposition.


\section{Smoothing maps into real algebraic sets}$\label{smoothing-sec}$
The goal of this section is to prove the following result.

\begin{theorem}\label{smoothing}
Let $i: X \hookrightarrow \R^n$ be a real algebraic set and let $M$ be a smooth manifold.   Assume that either $X$ or $M$ is compact.  Then for every continuous map $\phi:M \rightarrow X$ there exists a map $\phi': M \rightarrow X$ homotopic to $\phi$ such that $i \circ \phi'$ is smooth.\footnote{We use Euclidean topology throughout, except that ``irreducible algebraic set" means irreducible in the Zariski topology.  We will identify all real algebraic sets with their real points throughout this section.}
\end{theorem}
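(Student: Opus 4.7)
Following the strategy indicated in the introduction, the plan is to reduce the general case to the case of a normal crossing divisor (NCD) via resolution of singularities, and then handle NCDs by explicit differential-geometric means. To set up the reduction, first embed $X$ as the singular locus of an irreducible quasi-projective real variety $Y$: for instance, when $X\subseteq \R^n$ is cut out by polynomials $f_1,\dots,f_k$, one may take $Y\subseteq\R^{n+1}$ defined by $t^2 = \sum f_i^2$, whose algebraic singular locus is exactly $X\times\{0\}$ (modifying the construction as necessary to ensure irreducibility). Hironaka's resolution of singularities then yields a proper algebraic map $\pi\co\wt Y \to Y$ with $\wt Y$ smooth and $E := \pi^{-1}(X)$ a simple normal crossing divisor in $\wt Y$.

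For the NCD step, I would use that near each point $E$ has the local form $\{x_1\cdots x_k = 0\}$ in a smooth chart on $\wt Y$, so that a small neighborhood of $E$ in $\wt Y$ admits a continuous retraction onto $E$ compatible with the stratification by depth of intersection; this can be built by a partition-of-unity patching of local retractions that collapse the smallest-modulus coordinates to zero. Standard smooth approximation then gives a $C^\infty$ map $\psi\co M\to\wt Y$ close to any continuous map $\wt\phi\co M\to E$, with image in the chosen neighborhood; after a transversality perturbation making $\psi$ meet each stratum of $E$ transversally, the retraction can be smoothed along $\psi(M)$, producing a $C^\infty$ map $M\to E$ homotopic to $\wt\phi$.

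To reduce the general case to the NCD case, invoke two basic facts about real algebraic sets: they are Euclidean neighborhood retracts, and the preimage of a neighborhood retract under a proper algebraic map is again a neighborhood retract. These produce a deformation-retraction neighborhood $V$ of $X$ in $Y$ and a corresponding $\wt V = \pi^{-1}(V)$ that deformation retracts onto $E$. After a small perturbation, $\phi\co M\to X$ lifts up to homotopy to a map $\wt\phi\co M\to E$, using that $\pi|_E$ restricts to an isomorphism over a dense open subset of $X$ together with the CW structure on $M$. The NCD step smooths $\wt\phi$ within $E$, and composing with the algebraic (hence $C^\infty$) map $\pi|_E\co E\to X$ yields the desired smooth approximation of $\phi$. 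The compactness hypothesis on either $X$ or $M$ enters to make these neighborhood and patching constructions coherent, and in particular to restrict attention to a compact piece of the potentially noncompact $\wt Y$.

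The principal obstacle is the lifting step, since $\pi|_E$ is typically not a fibration and there is no continuous section in general. I expect to resolve this by combining transversality (pushing the image of $\phi$ off a small neighborhood of $\mathrm{Sing}(Y)\cap X$ up to homotopy, so that one is effectively lifting a map into the locus where $\pi|_E$ is one-to-one) with the ENR structure on $X$ and $E$; making this precise so that the perturbation is still compatible with the stratification of $E$ needed for the final differential-geometric step is where the main technical work lies.
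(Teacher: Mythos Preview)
Your overall strategy matches the paper's: embed $X$ as the singular locus of some irreducible $Y$, resolve, and reduce to the NCD case. But the lifting step, which you correctly flag as the principal obstacle, is where your proposal has a genuine gap, and the paper resolves it by a device you have not found.

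Your proposed resolution of the lifting problem rests on two claims that are not correct. First, $\pi|_E \co E\to X$ is \emph{not} an isomorphism over a dense open subset of $X$: the resolution $\Pi$ is an isomorphism over $Y\setminus X$, so over $X$ the map $\pi|_E$ is precisely the exceptional map and can collapse a great deal (e.g.\ if $X$ is a point, $E$ may be a projective space). Second, ``pushing $\phi$ off a small neighborhood of $\mathrm{Sing}(Y)\cap X$'' is circular, since by construction $\mathrm{Sing}(Y)=X$ and $\phi$ lands in $X$; there is no transversality argument that moves $\phi$ off $X$ while keeping it in $X$. With your quadric $Y=\{t^2=\sum f_i^2\}$ there is also no evident way to push $\phi$ into $Y\setminus X$, since at points of $X$ the $t$--coordinate is forced to vanish.

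The paper's key idea is to choose $Y$ so that pushing off $X$ inside $Y$ is trivial. Writing $X=Z(p)$ for a single polynomial $p$, set
\[
Y \;=\; Z\bigl(p(x)^2 + y_1^2 + y_2^3\bigr)\ \subset\ \bbR^{n+2}.
\]
The odd exponent on $y_2$ makes $Y$ the graph of $y_2 = -(p(x)^2+y_1^2)^{1/3}$, hence $Y\cong\bbR^{n+1}$, and one checks $\mathrm{Sing}(Y)=X\times\{0\}$. Now the push-off is explicit: $f_\epsilon(m)=(f(m),\epsilon^3,-\epsilon^2)$ lands in $Y\setminus X$ for small $\epsilon>0$, and since $\Pi$ is an isomorphism over $Y\setminus X$ there is a \emph{unique} lift $g_\epsilon\co M\to\wt Y\setminus E$. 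One then uses Durfee's rug-function neighborhoods to arrange that a neighborhood $U\supset X$ in $Y$ and its preimage $\Pi^{-1}(U)$ deformation retract onto $X$ and $E$ respectively; this gives $g\co M\to E$ with $\Pi\circ g\simeq f$. No transversality, no CW induction.

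For the NCD step the paper is also more direct than your sketch: rather than patching local retractions and then perturbing for transversality, it builds tubular neighborhoods $\phi_i\co\nu_i\hookrightarrow \wt Y$ of each irreducible component $M_i$ of $E$ that \emph{respect} the other components (via a vector field transverse to $M_i$ but tangent to every $M_j$, $j\ne i$). Each $\phi_i$ yields a smooth squashing map $H_i\co\wt Y\to\wt Y$ collapsing a neighborhood of $M_i$ onto $M_i$ while preserving $E$, and the composition $H_1\circ\cdots\circ H_N$ is a smooth retraction of a neighborhood of $E$ onto $E$. Composing with any smooth approximation of $f$ inside that neighborhood finishes the proof.
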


The following example helps illustrate the subtlety of Theorem \ref{smoothing}.  Consider the subspace $X \subset \R^2$  defined as the union of the graph 
$$\{(x,y)\, |\, y= x \sin(1/x), x\in [-1/\pi, 0) \cup (0, 1/\pi]\}\cup \{(0,0)\}$$ 
with the polygonal path traveling from $(-1/\pi, 0)$ to $(-1/\pi, 1)$ to $(1/\pi, 1)$ to $(1/\pi, 0)$.  The subspace $X$ is homeomorphic to $S^1$, so $\pi_1(X) \cong \Z$. However, the non-trivial elements of $\pi_1(X)$ cannot be represented by  smooth maps $\phi:S^1 \rightarrow X$ because the curve $x\sin (1/x)$ has infinite arc length.  Note, moreover, that  just as with the real algebraic sets considered in Theorem~\ref{smoothing}, it is possible to triangulate $\bbR^2$ with $X$ as a subcomplex.  This follows from the Schoenflies Theorem (see, for instance, Moise~\cite[Chapter 10, Theorem 4]{Moise}) but can in fact be done quite explicitly.

For the proof of Theorem \ref{smoothing}, we  assume that $M$ is compact, which is the only case needed  in the paper; the argument is similar when $X$ is compact but $M$ is not.

\begin{lemma}
Let $X \subset \R^n$ be a real algebraic set. There exists an irreducible real algebraic set $Y \subset \R^{n+2}$ such that $Y$ is homeomorphic to $\R^{n+1}$ and the projection map  $\pi \co \R^{n+2} = \R^n \oplus \R^2\to \R^n$ restricts to a homeomorphism $$\pi: Sing(Y) \srm{\cong} X.$$
\end{lemma}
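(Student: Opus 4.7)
The plan is to construct $Y$ explicitly as a hypersurface in $\bbR^{n+2}$. Write $X = V(f_1, \ldots, f_k)$ for some $f_i \in \bbR[x_1, \ldots, x_n]$ and set $f := \sum_i f_i^2$, so that $f \geqs 0$ pointwise and $\{f = 0\} = X$; note that each partial derivative $\partial f / \partial x_j = 2\sum_i f_i\,\partial f_i/\partial x_j$ automatically vanishes on $X$. Define
$$Y := \{(x, u, v) \in \bbR^{n+2} : v^3 = u^2 + f(x)\}.$$

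First I would check that $Y \homeo \bbR^{n+1}$: since $u^2 + f(x) \geqs 0$, the equation has the unique real solution $v = (u^2+f(x))^{1/3}$, which depends continuously on $(x,u)$, so projection onto the $(x,u)$--plane identifies $Y$ with a topological graph over $\bbR^{n+1}$. Next, setting $F(x,u,v) := v^3 - u^2 - f(x)$, I would compute the singular locus from the partials: $\partial_v F = 3v^2$ forces $v = 0$, $\partial_u F = -2u$ forces $u = 0$, and then $F = 0$ reduces to $f(x) = 0$, i.e., $x \in X$. At such points the remaining partials $\partial_{x_j} F = -\partial_{x_j} f$ vanish by the observation above, so $\mathrm{Sing}(Y) = X \times \{(0,0)\}$ and $\pi$ restricts to a homeomorphism onto $X$.

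The step requiring the most care is irreducibility. I would first verify that $F$ is irreducible in $\bbR[x, u, v]$: viewed as a cubic in $v$ over the UFD $\bbR[x, u]$, irreducibility follows from Gauss's lemma once one observes that a root $a \in \bbR[x, u]$ would have to satisfy $a^3 = u^2 + f(x)$, which is impossible by comparing degrees in $u$ (one would need $3\deg_u a = 2$). Since every real point of $Y$ with $v \neq 0$ is a smooth complex point of $V_\bbC(F)$, the set $Y$ is Zariski-dense in the irreducible complex variety $V_\bbC(F)$, so the vanishing ideal of $Y$ in $\bbR[x,u,v]$ equals $(F)$, which is prime. Hence $Y$ is irreducible as a real algebraic set. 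The geometric picture guiding the construction is that the cuspidal curve $v^3 = u^2$ contributes an isolated singularity in each fiber $\{x\} \times \bbR^2$, which is smoothly resolved by the perturbation $f(x)$ exactly when $x \notin X$.
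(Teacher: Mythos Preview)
Your construction is essentially identical to the paper's: the paper writes $X=Z(p)$ and sets $Y=Z(p(x)^2+y_1^2+y_2^3)$, which up to a sign and variable renaming is exactly your $v^3=u^2+f(x)$. Your treatment of irreducibility is more detailed than the paper's (which just appeals to irreducibility of the reduction $y_1^2+y_2^3$), and you correctly distinguish irreducibility of $F$ from irreducibility of the real algebraic set $Y$, a point the paper leaves implicit.
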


\begin{proof}
We may suppose that $X$ is equal to the zero set of a single polynomial $p = p(x_1,\ldots ,x_n)$, that is: 
$$X = Z(p).$$

Introduce new variables $y_1, y_2$ and consider the polynomial $g(x,y) = p(x)^2 + y_1^2 + y_2^3$. Define $$ Y = Z(g).$$  It is not hard to verify that the corresponding inclusion $\R^n \hookrightarrow \R^{n+2}$ identifies $X$ bijectively with the singular locus of $Y$, so the projection onto $\R^n$ identifies $ Sing(Y) \cong X$.

Given $x_1,\ldots ,x_n, y_1\in \bbR$, the formula $y_2 = ( g(x,y) - p(x)^2 - y_1^2)^{1/3}$ gives the 
unique solution to $g(x,y) = 0$.  Hence $Y$ is homeomorphic to $\R^{n+1}$. Furthermore, the polynomial $g(x,y)$ is irreducible, since its reduction $y_1^2 + y_2^3$ is irreducible (see, for example, Dummit and Foote~\cite[Section 9.1]{Dummit-Foote}).
\end{proof}

In particular, a map into $X$ is smooth as a map into $\R^n$ if and only if the corresponding map into $Sing(Y)$ is smooth as a map into $\R^{n+2}$. We identify $X$ with $Sing(Y)$ in what follows.  The following version of Hironaka's resolution of singularities theorem appears in Koll{\'a}r~\cite[Theorem 3.27]{Kollar}.

\begin{theorem}\label{arlcboeluracb}
Given a quasiprojective real algebraic variety $Y \subset \R^{n+2}$ with singular set $X$, there exists a nonsingular real algebraic variety $\wt{Y}$ and a birational and projective morphism $$ \Pi: \wt{Y} \rightarrow Y$$ which restricts to an isomorphism over $Y - X$ and for which $\wt{X} := \Pi^{-1}(X)$ is a divisor with simple normal crossings.
\end{theorem}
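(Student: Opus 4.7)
The plan is to follow Hironaka's strategy, as refined by Bierstone--Milman, Villamayor, and W{\l}odarczyk, adapted to the real algebraic setting. The essential idea is to construct $\wt{Y}$ as a finite composition of blow-ups along smooth centers contained in successive singular loci, arranged so as to strictly decrease a suitable local invariant.

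First, I would attach to each point $y \in Y$ a local invariant $\mathrm{inv}(Y,y)$ — for concreteness, a refinement of the Hilbert--Samuel function, or the maximal order of vanishing of a local defining ideal together with auxiliary tie-breaking data — and let $\mu(Y)$ denote its maximum. The stratum $C \subset Y$ where $\mathrm{inv}$ attains $\mu(Y)$ lies inside $X = \mathrm{Sing}(Y)$. A central technical step is to show that (after enough care in defining $\mathrm{inv}$) this top stratum $C$ is a smooth closed subvariety of $Y$ and is preserved by any algebraic automorphism of $Y$; this intrinsic character is what will ultimately let the construction descend from $\C$ to $\R$. Let $\Pi_1 \co Y_1 \to Y$ be the blow-up of $Y$ along $C$; basic local computations show $\Pi_1$ is projective and birational, an isomorphism over $Y - C$, and that the exceptional divisor $E_1 = \Pi_1^{-1}(C)$ is smooth.

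Next, I would establish the main claim: the maximum of $\mathrm{inv}$ on $Y_1$ is strictly smaller than $\mu(Y)$. This is the heart of the proof and the main obstacle — it requires delicate control over how the derived (coefficient or ``maximal contact'') ideals used to define $\mathrm{inv}$ transform under blow-up, together with an induction on the dimension of a local hypersurface of maximal contact to conclude that the invariant strictly drops. Granting this, iteration produces a finite tower
\[
\wt{Y} = Y_N \srt{\Pi_N} Y_{N-1} \srt{} \cdots \srt{} Y_1 \srt{\Pi_1} Y
\]
with $\wt{Y}$ nonsingular; the composition $\Pi \co \wt{Y} \to Y$ is then birational and projective, and is an isomorphism over $Y - X$ because every center $C_i$ is chosen inside the current singular locus.

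Finally, to see that $\wt{X} := \Pi^{-1}(X)$ is a simple normal crossings divisor, one arranges at each stage that the new center $C_i$ is smooth and meets the accumulated exceptional divisor $E_1 \cup \cdots \cup E_{i-1}$ transversally in normal crossings position; this is standard and can be incorporated into the invariant $\mathrm{inv}$ itself. Since $\Pi$ is an isomorphism over $Y - X$, the preimage $\wt{X}$ coincides with the total exceptional divisor $E_1 \cup \cdots \cup E_N$, which is simple normal crossings by construction. For the real case, the fact that each center $C_i$ is canonically defined by $\mathrm{inv}$ makes the entire construction Galois-equivariant, allowing us to descend the complex resolution to $\R$ and obtain the statement in the form cited from Koll{\'a}r.
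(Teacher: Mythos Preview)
The paper does not prove this theorem at all: it is quoted verbatim as \cite[Theorem 3.27]{Kollar} and used as a black box in the proof of Theorem~\ref{smoothing}. So your proposal is not a variant of the paper's argument---it is an outline of a proof where the paper gives none.

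As an outline of the modern functorial approach to Hironaka's theorem (Bierstone--Milman, Villamayor, W{\l}odarczyk), your sketch is broadly accurate: invariant, smooth top stratum, blow up, strict decrease, iterate, track normal crossings, descend via canonicity. Two small points are worth tightening. First, the identification $\Pi^{-1}(X) = E_1\cup\cdots\cup E_N$ does not follow merely from ``$\Pi$ is an isomorphism over $Y-X$''; you also need that $\wt{Y}$ is smooth and $X=\mathrm{Sing}(Y)$, so that any point of $\wt{Y}$ at which $\Pi$ is a local isomorphism must map into $Y-X$. Second, ``strictly decrease'' is doing an enormous amount of work: the invariant is lexicographic and infinite-valued, and the actual termination argument requires upper-semicontinuity, the existence of maximal-contact hypersurfaces, and a careful induction on dimension---each of these is itself a substantial theorem. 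None of this is wrong, but be aware that what you have written compresses several hundred pages into a paragraph; for the purposes of this paper, citing Koll\'ar as the authors do is the appropriate move.
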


Morphism here means regular map, which in particular means that $\Pi$ is smooth as a map $\wt{Y}\to \R^{n+2}$. The statement that $\Pi$ is projective implies that it is proper in the Euclidean topology (this is well known over $\C$ and follows over $\R$ by taking fixed points of complex conjugation). The simple normal crossing condition (plus properness) implies that $\wt{X}$ is a finite union of embedded, boundary-free, connected submanifolds of codimension 1, intersecting transversely.

\begin{lemma}
Let notation be as in Theorem \ref{arlcboeluracb}, and let $M$ be a finite simplicial complex. Then for every continuous map $f: M \rightarrow X$, there exists a continuous map $g: M \rightarrow \wt{X}$ for which $\Pi \circ g$ is homotopic to $f$.
\end{lemma}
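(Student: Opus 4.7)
The plan is to lift $f$ up to homotopy by first pushing it into an open neighborhood of $X$ that carries a controlled retraction to $\wt X$ through $\Pi$, using properness of $\Pi$ and the normal-crossings structure to transfer information between $X$ and $\wt X$. Concretely, I would build open sets $\wt X \subset V \subset \wt Y$ and $X \subset U \subset Y$ with $\Pi^{-1}(U) \subset V$, equipped with a strong deformation retraction $\rho_t\co V \to V$ onto $\wt X$ and a deformation retraction of $U$ onto $X$. The neighborhood $V$ and its retraction come from the NCD structure on $\wt X$: locally $\wt X$ looks like $\{x_1\cdots x_k = 0\}$ in smooth charts of $\wt Y$, and stratified tubular neighborhoods patched across these charts give $V$ and $\rho_t$. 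The containment $\Pi^{-1}(U) \subset V$ is automatic from properness once one sets $U := Y\setminus \Pi(\wt Y\setminus V)$, and the deformation retraction onto $X$ exists because real algebraic sets are ANRs.

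Next I would produce a continuous map $f'\co M \to U$ with $f' \simeq \iota\circ f$ inside $U$ and $f'(M) \subset U \setminus X$. When $\dim M$ is less than the codimension of $X$ in $Y$, this is a straightforward transversality argument applied to a smooth approximation of $\iota\circ f$ in $\bbR^{n+2}$. In the general case I would build $f'$ by a stratified local-to-global argument: stratify $X$ by its singular filtration, triangulate $M$ subordinately to $f^{-1}$ of this stratification, and use the NCD charts of $\wt Y$ over each stratum to assemble compatible small perturbations of $f$ off $X$ simplex by simplex.

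Once $f'$ is in place, the homeomorphism $\Pi\co \wt Y \setminus \wt X \to Y \setminus X$ lifts $f'$ uniquely to $\wt{f'}\co M \to \Pi^{-1}(U)\setminus \wt X \subset V$, and I would set $g := \rho_1 \circ \wt{f'}\co M \to \wt X$. To see $\Pi\circ g \simeq f$, chain together the homotopy $f \simeq f'$ in $U$ (which lifts uniquely across $\Pi$ off $X$) with the homotopy $\wt{f'} \simeq g$ given by $\rho_t$, then post-compose with the deformation retraction of $U$ onto $X$ to obtain a homotopy entirely inside $X$.

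The main obstacle is constructing $f'$ in the middle step. Genuine transversality suffices only when $\dim M < \mathrm{codim}_Y(X)$; in the general situation one must exploit the compatibility between the local NCD models of $\wt X\to X$ and the singular stratification of $X$ in $Y$, piecing together an off-$X$ perturbation of $f$ along a sufficiently fine triangulation of $M$. Getting this stratified perturbation argument to globalize cleanly is where I expect the real work to lie.
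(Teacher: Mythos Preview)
Your structural plan---push $f$ off $X$ inside a good neighborhood, lift through the isomorphism $\Pi\co \wt Y\setminus\wt X \to Y\setminus X$, then retract back---matches the paper's. But you have made the central step much harder than it is, and the ``real work'' you flag in your last paragraph is in fact free once you remember how $Y$ was built.

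Recall from the lemma preceding Theorem~\ref{arlcboeluracb} that $Y = Z\bigl(p(x)^2 + y_1^2 + y_2^3\bigr)$ and that $X = \mathrm{Sing}(Y)$ sits at $y_1 = y_2 = 0$. So there is an \emph{explicit} perturbation of $f$ off $X$ inside $Y$: for small $\epsilon>0$ set $f_\epsilon(m) = (f(m),\, \epsilon^3,\, -\epsilon^2)$. This lands in $Y\setminus X$ by construction and is homotopic to $f$ inside any neighborhood of $f(M)$ in $Y$. No transversality, no stratified induction over a triangulation of $M$---the coordinates on $Y$ hand you $f'$ outright, and your fourth paragraph evaporates.

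The paper also handles the neighborhoods more directly than your NCD-tube construction. Since $M$ is compact, $f(M)\subset X\cap B$ for some closed ball $B$. Both $X\cap B$ and $\Pi^{-1}(X\cap B)$ are compact semi-algebraic, so Durfee's rug-function method produces an open $U\supset X\cap B$ for which both $X\cap B\hookrightarrow U$ and $\Pi^{-1}(X\cap B)\hookrightarrow \Pi^{-1}(U)$ are homotopy equivalences. Then $f_\epsilon$ lifts uniquely to $g_\epsilon\co M\to \Pi^{-1}(U)$, and because the horizontal maps in the resulting square are homotopy equivalences, the existence of $g\co M\to \wt X$ with $\Pi\circ g\simeq f$ follows from a diagram chase. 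Note also that your appeal to ANR theory for the retraction of $U$ onto $X$ is not quite enough as stated: ANR gives \emph{some} retracting neighborhood, not that a prescribed one works, and $X$ need not be compact. Restricting to $X\cap B$ and invoking Durfee fixes both issues at once.
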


\begin{proof}  Choose $Y$ as in the proof of Theorem~\ref{arlcboeluracb}.
Since $M$ is compact, the image $f(M) \subseteq X \subseteq Y \subseteq \R^{n+2}$ is also compact.  Choose a sufficiently large closed ball $B \subset \R^{n+2}$ so that $f(M) \subseteq B\cap X$.  Then $B\cap X$ is a compact semi-algebraic subset of  $\R^{n+2}$, so by Durfee \cite{Durfee} there exists an open $U\supset B\cap X$ for which inclusion is a homotopy equivalence. Furthermore,  $\Pi^{-1}( B \cap X)$ is a compact, semi-algebraic subset of the non-singular variety $\wt{Y}$ so using the method of rug functions from \cite{Durfee}, $U$ may be chosen so that $\Pi^{-1}(U)$ deformation retracts to $\Pi^{-1}( B \cap X)$.  We are left with a commuting diagram

\begin{equation}\label{rcebarcuebra} \xymatrix{  \Pi^{-1}(B\cap X) \ar[d] \ar[r] & \ar[d] \Pi^{-1}(U)\\
                 B\cap X \ar[r] & U    } \end{equation}
where the horizontal arrows are homotopy equivalences and the vertical maps are restrictions of $\Pi$.\footnote{One may establish the existence of such a diagram using the theory of regular neighborhoods, as presented in Hudson--Zeeman~\cite{Hudson-Zeeman}, instead of the theory of rug functions.}  Since $f(M) \subset B\cap X$ is compact, for a sufficiently small $\epsilon >0$,  $f$ is homotopic, as a map into $U$, to
$$f_{\epsilon}: M \rightarrow U, ~~f_{\epsilon} (m) = (f(m), \epsilon^3, -\epsilon^2).$$ 
In fact, since $f_{\epsilon}(M) \subset Y \setminus X$ and $\Pi$ restricts to homeomorphism $\wt{Y} \setminus \wt{X} \cong Y\setminus X$, there exists a unique map $g_{\epsilon}: M \rightarrow \Pi^{-1}(U)$ with $\Pi \circ g_{\epsilon} = f_{\epsilon}$.  Because the horizontal arrows of (\ref{rcebarcuebra}) are homotopy equivalences, this implies the existence of $g$.
\end{proof}

To summarize: For each real, affine set $X$ and each map $f:M \rightarrow X$ we may lift to a map $g : M \rightarrow \wt{X}$ such that $\Pi \circ g$ is homotopic to $f$ (in $X$). If $g$ can be deformed inside $\wt{X}$ to a smooth map $h$, then $\Pi \circ h$ is smooth and homotopic to $f$. Thus to prove Theorem \ref{smoothing} it suffices to prove the special case that $X$ is a simple normal crossing divisor. In terms of the underlying smooth manifolds,  we may restrict to the case that $Y$ is a smooth manifold without boundary and $ X \subset Y$ is a union $$X = \bigcup_{i=1}^N M_i$$ where each $M_i$ is an embedded, connected manifold (without boundary) of codimension one in $Y$, and $Y$ may be covered by coordinate neighbourhoods whose intersection with $X$ is a union of coordinate hyperplanes. 

\begin{lemma}\label{lcabolecruba}
Let $Y$ be a non-singular irreducible real algebraic set and let $X \subset Y$ be a simple normal crossing divisor as above. For each $i \in 1,\ldots ,N$, let $$\pi_i: \nu_i \rightarrow M_i$$ be the (one-dimensional) normal bundle of $M_i$ in $Y$. Then there exists a smooth tubular neighbourhood map $$\phi_i: \nu_i \hookrightarrow Y$$
such that for  all $ j \not=i$, we have $\phi_i(n) \in M_j \Leftrightarrow \phi_i(\pi_i(n)) \in M_j$. 

\end{lemma}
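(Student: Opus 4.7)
The plan is to produce $\phi_i$ as the flow of a smooth vector field $V$ defined on a neighbourhood $W$ of $M_i$ in $Y$, chosen so that $V$ is transverse to $M_i$ and tangent to every $M_j$ with $j\ne i$. Once such a $V$ is in hand, the $\Leftarrow$ direction of the desired equivalence is automatic: the flow of $V$ preserves each $M_j$ setwise, so fibres of $\nu_i$ over points of $M_i\cap M_j$ are sent into $M_j$. The $\Rightarrow$ direction is then a standard shrinking argument, using the closedness of $M_j$ in $Y$ and the openness of $M_i\setminus M_j$ in $M_i$ to find a smooth positive radius function $\epsilon\co M_i\to (0,\infty)$ such that $\phi_i$ restricted to $\{n\in\nu_i:\|n\|<\epsilon(\pi_i(n))\}$ sends fibres over $M_i\setminus M_j$ into $Y\setminus M_j$.

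To construct $V$, I would exploit the simple normal crossings hypothesis: cover $M_i$ by coordinate charts $(U_\alpha,x_1^\alpha,\ldots,x_n^\alpha)$ on $Y$ in which $M_i\cap U_\alpha=\{x_1^\alpha=0\}$ and every other $M_j$ meeting $U_\alpha$ has the form $\{x_{k(j,\alpha)}^\alpha=0\}$ for some $k(j,\alpha)\ne 1$; only finitely many $M_j$ meet each $U_\alpha$ by local finiteness. On each chart the local field $V_\alpha:=\partial/\partial x_1^\alpha$ is transverse to $M_i$ and tangent to every $M_j$ meeting $U_\alpha$. Taking a locally finite refinement with a subordinate partition of unity $\{\rho_\alpha\}$, set $V:=\sum_\alpha\rho_\alpha V_\alpha$ on $W$. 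Tangency to $M_j$ is closed under $C^\infty$-linear combinations (the tangent vector fields to $M_j$ form a $C^\infty(W)$-module), so $V$ is tangent to every $M_j$ on $W\cap M_j$.

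The main obstacle is verifying that $V$ remains transverse to $M_i$ after this patching: since $\nu_i$ is a one-dimensional bundle, the projections of the $V_\alpha$ to $\nu_i|_{M_i}$ along any overlap agree only up to a nonzero scalar, and a sign disagreement can cause a convex combination to vanish. I would handle this by first refining the cover and, where necessary, replacing $V_\alpha$ with $-V_\alpha$, so that on every nonempty overlap $U_\alpha\cap U_\beta\cap M_i$ the projections of $V_\alpha$ and $V_\beta$ to $\nu_i$ lie in a common half-line --- always achievable locally, and achievable globally exactly when $\nu_i$ is orientable. In the non-orientable case I would carry out the whole construction $\Z/2$-equivariantly on the orientation double cover of $M_i$ (inside a corresponding double cover of $W$) and descend the resulting tubular embedding to $M_i$. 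Once $V$ has been produced with the desired properties, $\phi_i:\nu_i\hookrightarrow Y$ is defined via the flow of $V$ in the standard way (identifying $\nu_i$ with the line bundle over $M_i$ spanned by the projection of $V|_{M_i}$), and both smoothness of $\phi_i$ on a neighbourhood of the zero section and the claimed compatibility with each $M_j$ follow from the construction together with the shrinking argument described in the first paragraph.
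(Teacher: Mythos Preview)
Your proposal is correct and follows essentially the same route as the paper: construct, via a partition of unity in normal-crossings charts, a vector field transverse to $M_i$ and tangent to each $M_j$ ($j\ne i$), then use its flow to produce $\phi_i$; handle the non-orientable normal bundle by passing to the orientation double cover and working $\bbZ/2$--equivariantly (the paper averages to make the lifted field $\tau$--anti-invariant, which is exactly what descent requires). The one minor difference is in the $\Rightarrow$ direction: the paper arranges $V$ to be a complete vector field on all of $Y$, so its flow is a one-parameter group of diffeomorphisms of $Y$ preserving each $M_j$ \emph{as a set}, hence also preserving $Y\setminus M_j$, which gives both implications at once; your version defines $V$ only on a neighbourhood of $M_i$ and recovers $\Rightarrow$ by a separate shrinking argument, which is equally valid and perhaps a bit more careful about integrability.
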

\begin{proof}
Consider first the situation where the normal bundle $\nu_i$ is trivial, so that $\nu_i \cong M_i \times \R$. Choose an integrable, smooth vector field $V \in \mathfrak{X}(Y)$, which is transverse to $M_i$ but is tangent to $M_j$ for all $j \not=i$. Because $\cup_{i=1}^n M_i$ looks locally like a union of coordinate hyperplanes, such a $V$ is easily constructed by gluing together locally defined vector fields using a partition of unity. Then $V$ integrates a one-parameter group of diffeomorphisms $\Phi: Y \times \R \rightarrow Y$ preserving $M_j$ for $j \not=i$. The restriction $ \Phi|_{M_i\times \R}: M_i \times \R \rightarrow Y$ sends $M_i \times \{0\}$ diffeomorphically onto $M_i$  and is regular along $M_i \times \{0\}$ because $V$ is transverse to $M_i$. By a version of the inverse function theorem (Guillemin-Pollack \cite[1.8 Exercise 14]{GP}), $\Phi|_{M_i\times \R}$ restricts to a diffeomorphism between a tubular neighbourhood of $M_i\times \{0\}$ in $M_i\times \R$ and a tubular neighbourhood of $M_i$ in $Y$. Identifying $\nu_i$ with a tubular neighbourhood in $M_i\times \R$ completes the argument.

Now consider the case that $\nu_i$ is non-orientable. Let $\rho: \wt{M}_i \rightarrow M_i$ be a double covering map with $\rho^*\nu_i$ trivial ( $\wt{M}_i$ may be taken to be the unit sphere bundle in $\nu_i$ for some metric on $\nu_i$). Let $ M_i \subset U$ be an open tubular neighbourhood of $M_i$ in $Y$. Taking the corresponding cover of $U$, we may form the commutative diagram

$$
	\xymatrix{  \wt{M_i} \ar[r] \ar[d]_{\rho} & \wt{U} \ar[d]^{\rho_U} \\
	              M_i  \ar[r] & U } $$
where $\rho$ and $\rho_U$ are two-fold coverings with deck transformation $\tau$, and the horizontal maps are $\tau$-equivariant embeddings for which $\wt{M_i}$ has trivial normal bundle. The preimage $\rho_U^{-1}(X \cap U)$ is a normal crossing divisor in $\wt{U}$ containing $\wt{M_i}$. 

As before, choose a smooth vector field $V \in \mathfrak{X}(U)$ which is transverse to $\wt{M_i}$ and tangent to $\rho_U^{-1}(M_j)$ for $j \not= i$. Replacing $V$ by $\frac{1}{2}(V - \tau_*(V))$, we may assume that $\tau_*(V) = -V$. 				Proceeding as before, we obtain a $\tau$-equivariant submersion $\wt{\phi}_i :\rho^*\nu_i \rightarrow \wt{U}  $. This descends to a tubular neighbourhood map
$$\phi_i: \nu_i \cong (\rho^*\nu_i) / \tau \rightarrow U \subseteq Y$$
satisfying the requirements of the theorem.

\end{proof}

\begin{proof}[Proof of Theorem \ref{smoothing}]
	It suffices to consider the case of that $X = \bigcup_{i=1}^N M_i$ is a simple normal crossing divisor in a non-singular, irreducible real algebraic set $Y$.  The proof proceeds by constructing an open (weak) deformation retraction neighbourhood $U$ of $X$ such that the retraction $r: U \rightarrow X$ is smooth as a map to $Y$. Since any continuous map $f:M \rightarrow X$ is homotopic to a smooth map $f': M \rightarrow U$, the composition $r \circ f'$ is smooth and homotopic to $f$.
	
Choose a tubular neighbourhood $ \phi_i: \nu_i \rightarrow Y$ for each $M_i$ as in Lemma \ref{lcabolecruba} and choose a smooth inner product on 
$\nu_i$. Define the map $h_i:\nu_i \rightarrow \nu_i$  by $ (m, v) \rightarrow (m, s(|v|^2) v)$ where $s: \R \rightarrow \R$ is a smooth map such that $s(x) =0 $ for $|x| <1$ and $s(x) = 1$ for $|x| >2$. Extending $\phi_i \circ h_i \circ \phi_i^{-1}$ by the identity map determines a smooth map $H_i: Y \rightarrow Y$ such that 
\begin{itemize}
\item $H_i(U_i) = M_i$ for some open neighbourhood $U_i$ of $M_i$,
\item $H_i(X) \subseteq X$, and
\item $H_i|_X$ is homotopic to the identity on $X$. 	
\end{itemize}
The composition $H_1 \circ \cdots\circ H_N$ sends the open neighbourhood $$V = \bigcup_{i=1}^N (H_{i+1} \circ \cdots\circ H_N)^{-1}(U_i)$$ to $X$. Taking any (possibly smaller) neighbourhood $X \subset U \subseteq V$ such that $U$ deformation retracts to $X$ completes the proof.
\end{proof}


\section{Characteristic classes of flat families}$\label{char-classes}$
Let $\Gamma$ be a discrete group, let $G$ be a Lie group, and let $\Hom(\Gamma, G)$ denote the space of all  homomorphisms from $\Gamma$ to $G$, with the subspace topology inherited from product topology on $G^\Gamma$.   A continuous map $\rho\co X\to \Hom(\Gamma, G)$ will be called an $X$--\e{family} of representations (or simply an $X$--family), and we set $\rho_x = \rho(x)$.  
We say that an $X$--family $\rho$ is smooth if $X$ is a smooth manifold and for each $\gamma\in \Gamma$, the map $X\to G$ given by $x\goesto \rho_x (\gamma)$ is smooth.

We now associate a principal $G$--bundle  to each $X$--family.
Given a based CW complex $(N, n_0)$, let $\wt{N}$ denote the universal cover of $N$.  We view $\wt{N}$ as the set (appropriately topologized) of based homotopy classes $\la l \ra$ of paths $l\co [0, 1]\to N$ with $l(1) = n_0$ (the opposite  convention is commonly used, e.g. in Hatcher~\cite[Section 1.3]{Hatcher}).  The constant path at $n_0$ will be denoted $\wt{n}_0$, and the projection 
$q\co (\wt{N}, \wt{n}_0)\to (N, n_0)$
 given by evaluation at $0$ is a right principal $(\pi_1 (N, n_0))$--bundle, with right action given by $\la l\ra \cdot \la\alpha\ra=\la l\cc \alpha\ra$ for any loop $\alpha$ based at $n_0$ (here $l\cc \alpha$ denotes the concatenated loop tracing out $l$ first and then $\alpha$).

Associated to an $X$--family $\rho$ of representations of $\pi_1 (N, n_0)$, we can now build a right principal $G$--bundle 
$$E_\rho (N) = (X\cross \wt{N} \cross G)/ \pi_1 (N, n_0)\srm{\pi} X\cross N$$
where $\gamma\in \pi_1 (N, n_0)$ acts by $(x, \wt{n}, g)  \cdot  \gamma = (x, \wt{n}  \cdot \gamma , \rho_x (\gamma)^{-1} g)$, and $\pi$ is given by $\pi([x, \wt{n}, g]) = (x, q(\wt{n}))$.  If $X$ and $N$ are smooth manifolds and $\rho$ is a smooth $X$--family, then this action of $\pi_1 (N, n_0)$ is smooth, properly discontinuous, and $G$--equivariant, so the orbit space $E_\rho (N)$ inherits (in a canonical fashion) the structure of a smooth principal  $G$--bundle over $(X \times \wt{N})/\pi_1 (N, n_0) \cong X \times N$.

If $\Gamma$ is a discrete group, we set $E_\rho := E_\rho (B\Gamma)$, where $B\Gamma$ is the simplicial model for the classifying space of $\Gamma$ (i.e. $B\Gamma$ is the geometric realization of $\Gamma$ viewed as a category with one object).
Let $E\Gamma \to B\Gamma$ denote the simplicial model for the universal principal $\Gamma$--bundle.  We use the conventions described in Ramras~\cite[Section 2]{Ramras-excision}, so that this is a right principal $\Gamma$--bundle.  Universality of $E\Gamma$ yields a $\Gamma$--equivariant homeomorphism $\wt{B\Gamma} \to E\Gamma$ (covering the identity on $B\Gamma$).  
Hence we have an isomorphism of principal $G$--bundles over $B\Gamma$:
$$E_\rho = (X\cross \wt{B\Gamma} \cross G)/\Gamma \isom (X\cross E\Gamma \cross G)/\Gamma.$$
This alternate description of $E_\rho$ will be useful due to its relationship with the (simplicial)  universal principal $G$--bundle $EG \to BG$.

We now record some basic properties of these constructions.

\begin{lemma}\label{compatible} 
Let $(N, n_0)$ and $(N', n'_0)$ be based, connected CW complexes and let $\rho$ and $\rho'$ be $X$--families of representations of $\pi_1 (N, n_0)$.

\begin{enumerate}
\item Given maps $(N', n'_0) \srt{g} (N, n_0)$ and $X' \srt{f} X$,
there is an isomorphism 
$$(f\cross g)^* (E_\rho (N)) \isom E_{g^\# \circ \rho\circ f} (N')$$
of principal $G$--bundles over $X'\cross N'$, where
$$g^\#\co \Hom(\pi_1 (N, n_0), G)\maps  \Hom(\pi_1 (N', n'_0), G)$$
is the map induced by composition with $g_* \co \pi_1 (N', n'_0) \to \pi_1 (N, n_0)$.

\item If $\rho$ and $\rho'$ are homotopic, then 
$E_\rho (N) \isom E_{\rho'} (N)$.

\end{enumerate}
\end{lemma}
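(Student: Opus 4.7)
The plan is to prove part~(1) directly by exhibiting a map of total spaces induced by the evident naturality of the mixing construction, and to deduce part~(2) by applying part~(1) to a homotopy together with the standard homotopy invariance of pullbacks of principal bundles.

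For part~(1), I would begin with the observation that $g\co (N',n_0')\to (N,n_0)$ lifts uniquely to a basepoint-preserving map of universal covers $\wt{g}\co (\wt{N}',\wt{n}_0')\to (\wt{N},\wt{n}_0)$, given by postcomposition of paths: $\la l\ra \mapsto \la g\cc l\ra$. This map is equivariant with respect to $g_*\co \pi_1(N',n_0')\to \pi_1(N,n_0)$, that is, $\wt{g}(\wt{n}'\cdot \gamma') = \wt{g}(\wt{n}')\cdot g_*(\gamma')$ for all $\wt{n}'\in \wt{N}'$ and $\gamma'\in \pi_1(N',n_0')$. Now consider the map
$$F = f\cross \wt{g}\cross \id\co X'\cross \wt{N}'\cross \GL_n(\bbC) \maps X\cross \wt{N}\cross \GL_n(\bbC).$$
Writing $\sigma = g^\# \circ \rho \circ f$, the action of $\gamma'\in \pi_1(N',n_0')$ on the source sends $(x',\wt{n}',A)$ to $(x',\wt{n}'\cdot \gamma', \sigma_{x'}(\gamma')^{-1}A) = (x', \wt{n}'\cdot \gamma', \rho_{f(x')}(g_*\gamma')^{-1}A)$. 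Applying $F$ and using the equivariance of $\wt{g}$ shows that this coincides with $F(x',\wt{n}',A)\cdot g_*(\gamma')$. Hence $F$ descends to a $\GL_n(\bbC)$-equivariant map $\ol{F}\co E_\sigma(N') \to E_\rho(N)$ that covers $f\cross g\co X'\cross N' \to X\cross N$. Since $\ol{F}$ is a morphism of principal $\GL_n(\bbC)$-bundles, the universal property of pullbacks gives an isomorphism $E_\sigma(N')\isom (f\cross g)^*E_\rho(N)$.

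For part~(2), let $H\co X\cross I\to \Hom(\Gamma,\GL_n(\bbC))$ be a homotopy with $H|_{X\cross\{0\}} = \rho$ and $H|_{X\cross\{1\}} = \rho'$. Form the principal $\GL_n(\bbC)$-bundle $E_H(N)$ over $X\cross I\cross N$, and consider the inclusions $\iota_t\co X\cross N \injects X\cross I\cross N$ at $t=0,1$. Applying part~(1) with $f = \iota_t$ on the $X$-factor and $g = \id_N$, one obtains $\iota_0^*E_H(N)\isom E_\rho(N)$ and $\iota_1^*E_H(N)\isom E_{\rho'}(N)$. Since $\iota_0\heq \iota_1$, the standard homotopy invariance of principal $\GL_n(\bbC)$-bundles over $X\cross N$ (which holds whenever $X\cross N$ has the homotopy type of a CW complex, in particular when $N$ is a CW complex) yields $\iota_0^*E_H(N)\isom \iota_1^*E_H(N)$, and hence $E_\rho(N)\isom E_{\rho'}(N)$.

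The argument is essentially formal naturality; the only real point requiring care is matching the conventions for the $\pi_1$-action on $\wt{N}$ (concatenation on the right) with the action defining $E_\rho(N)$ (involving $\rho_x(\gamma)^{-1}$), so that the map $F$ above genuinely intertwines the two actions via $g_*$. Once that bookkeeping is in place, both statements follow from standard bundle theory.
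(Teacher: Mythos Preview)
Your proof is correct and follows essentially the same approach as the paper's: construct the equivariant lift $\wt{g}$ on universal covers to exhibit the bundle map for part~(1), and apply the Bundle Homotopy Theorem to the family $E_H(N)\to X\times I\times N$ for part~(2). One small slip: you write $\la g\cc l\ra$ for the lift, but $\cc$ denotes concatenation of paths in this paper, whereas you mean the composition $\la g\circ l\ra$; also, your parenthetical about CW hypotheses is slightly loose since $X$ is arbitrary, but the paper is no more careful on this point.
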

\begin{proof} The map $f$ induces a map 
$\wt{f} \co (\wt{N}, \wt{n}_0)\maps (\wt{N'}, \wt{n'_0}),$
defined by $\wt{f} (\la l \ra) = \la f\circ l \ra$,
and $\wt{f}$ is equivariant in the sense that 
$\wt{f} (\wt{n}\cdot \gamma) = \left( \wt{f} (\wt{n})\right) \cdot f_* (\gamma)$
for each $\wt{n}\in \wt{N}$ and each $\gamma\in \pi_1 (N, n_0)$.  This proves (1).
Part (2) follows from the Bundle Homotopy Theorem, because if 
$H\co X\cross I\to \Hom(\Gamma, G)$
 is a homotopy from $\rho$ to $\rho'$, then 
 $E_{H} (N) \to X\cross I\cross N$
 is a bundle homotopy between $E_\rho (N)$ and $E_{\rho'} (N)$.
\end{proof}

Let $S$ be a finite generating set for $\Gamma$, with cardinality $s$, and let $G = \GL_n (\bbC)$.  Then the image of the restriction map $\Hom(\Gamma, \GL_n (\bbC))\to (\GL_n (\bbC))^s$ is a real algebraic subset of $(\GL_n (\bbC))^s$, which is itself a real algebraic subset of $\bbR^{4n^2 s}$ via the homeomorphism 
\begin{equation} \label{GLn}\GL_n (\bbC) \isom \{(A, B)\in (M_n (\bbC))^2 \,|\, AB = I_n\} \subset \bbC^{2n^2} \subset \bbR^{4n^2}.\end{equation}
If $X$ is a smooth manifold, Theorem~\ref{smoothing} guarantees that every $X$--family $\rho$  is homotopic (through $X$--families of representations) to a smooth $X$--family (the notion of smoothness from Section~\ref{smoothing-sec}, which depends on the embedding (\ref{GLn}) and the choice of generating set, agrees with the more intrinsic notion of smoothness introduced at the start of this section; this follows easily from the fact that inversion in $\GL_n (\bbC)$ is a smooth map).  The same is true for $G = U(n)$.

\begin{definition}$\label{fwf-def}$ If $X$ and $M$ are smooth manifolds and $E\to X\cross M$ is a smooth principal $\GL_n (\bbC)$--bundle, we say that a connection on $E$ is \e{fiberwise flat} if its restriction  to each slice $\{x\} \cross M$ is a flat connection $(x\in X)$.   We define fiberwise flat connections for families of $U(n)$--bundles in the analogous manner.  
(We think of  $E$ as a family, parametrized by $X$, of bundles over $M$.)
\end{definition}

\begin{lemma}$\label{fwf-conn}$
 If $X$ and $M$ are smooth manifolds  and 
 $$\rho\co X\to \Hom(\pi_1 (M, m_0), \GL_n (\bbC))$$
  is a smooth family of representations,
 then the bundle $E_{\rho} (M)$ admits a fiberwise flat connection whose holonomy over $\{x\} \cross M$ (computed at $[x, \wt{m_0}, I] \in E_{\rho} (M)$) is precisely $\rho(x)$.  The analogous result holds in the unitary case.
 \end{lemma}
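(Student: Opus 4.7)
My plan is to mimic the classical construction of a flat connection on a bundle associated to a single representation. The main subtlety is that we cannot simply pull back the Maurer--Cartan form on $X \times \wt{M} \times \GL_n(\bbC)$: it fails to be $\pi_1(M, m_0)$-equivariant precisely because $\rho_x$ varies with $x$ (the pullback along $\gamma$ picks up a term of the form $A^{-1}\, d\rho_\cdot(\gamma)\, \rho_\cdot(\gamma)^{-1} A$ in the $dx$-direction). Instead, I will build the connection locally over simply connected patches of $M$ and glue with a partition of unity.

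Concretely, I choose an open cover $\{U_\alpha\}$ of $M$ by simply connected sets (say, geodesically convex balls in some Riemannian metric) together with fixed lifts $\wt{U}_\alpha \subset \wt{M}$. The map $\Psi_\alpha(x, m, A) = [x, \wt{m}^\alpha, A]$, where $\wt{m}^\alpha$ is the unique lift of $m$ in $\wt{U}_\alpha$, defines a smooth trivialization of $E_\rho(M)$ over $X \times U_\alpha$. A direct computation using the definition of the $\pi_1(M, m_0)$-action shows that the resulting transition functions are $g_{\alpha\beta}(x, m) = \rho_x(\gamma_{\alpha\beta}(m))$, where $\gamma_{\alpha\beta}(m) \in \pi_1(M, m_0)$ is the deck transformation with $\wt{m}^\beta = \wt{m}^\alpha \cdot \gamma_{\alpha\beta}(m)$. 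The crucial observation is that $\gamma_{\alpha\beta}$ is locally constant in $m$ (constant on each connected component of $U_\alpha \cap U_\beta$), so $g_{\alpha\beta}$ depends smoothly only on $x$, and hence $dg_{\alpha\beta}$ involves only $dx$.

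Fixing a partition of unity $\{\psi_\alpha\}$ on $M$ subordinate to $\{U_\alpha\}$, I define local connection $1$-forms $\omega^\beta := \sum_\alpha \psi_\alpha \cdot g_{\alpha\beta}^{-1}\, dg_{\alpha\beta}$ on $X \times U_\beta$, with $\psi_\alpha$ understood as pulled back along the projection to $M$. A standard calculation using $g_{\alpha\gamma} = g_{\alpha\beta}\, g_{\beta\gamma}$ and the Leibniz rule (substitute $dg_{\alpha\beta}\, g_{\beta\gamma} = dg_{\alpha\gamma} - g_{\alpha\beta}\, dg_{\beta\gamma}$, then conjugate) yields the cocycle relation $\omega^\gamma = g_{\beta\gamma}^{-1}\, \omega^\beta\, g_{\beta\gamma} + g_{\beta\gamma}^{-1}\, dg_{\beta\gamma}$ on triple overlaps. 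Hence these forms glue to a global connection on $E_\rho(M)$.

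To verify the conclusions: since each $dg_{\alpha\beta}$ has only $dx$-components, so does every $\omega^\beta$, and therefore $\omega^\beta$ vanishes on vectors tangent to any slice $\{x\} \times M$. Thus the slice connection is represented by the zero form in each local trivialization, which is the standard flat connection on $(\wt{M} \times \GL_n(\bbC))/\pi_1(M, m_0)$ with $\pi_1$-action through $\rho_x$; the classical computation of flat holonomy by chaining transition functions around a loop then gives holonomy $\rho_x(\alpha)$ at $[x, \wt{m}_0, I]$. The unitary case follows verbatim once we observe that if $\rho$ takes values in $U(n)$, each $g_{\alpha\beta}$ is unitary and $g_{\alpha\beta}^{-1}\, dg_{\alpha\beta}$ lies in $\mathfrak{u}(n)$. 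The real content is the local constancy of $g_{\alpha\beta}$ in $m$; the cocycle calculation and holonomy computation are routine, so I do not anticipate a substantial obstacle beyond careful bookkeeping of conventions.
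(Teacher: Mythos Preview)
Your proof is correct and takes a genuinely different route from the paper's. The paper works in the Ehresmann picture: on $\wt{E} = X \times \wt{M} \times \GL_n(\bbC)$ it singles out the distribution $\wt{D}$ tangent to the slices $\{x\}\times \wt{M}\times\{A\}$, observes that $\wt{D}$ is invariant under both $\Gamma$ and $\GL_n(\bbC)$ and hence descends to a subbundle $D\subset TE_\rho(M)$ transverse to the vertical bundle $V$, and then completes $D$ to a horizontal distribution $H = D + (D+V)^\perp$ using a $\GL_n(\bbC)$--invariant Riemannian metric. Holonomy is read off directly from horizontal lifts. You instead work with connection $1$--forms: you trivialize $E_\rho(M)$ over $X\times U_\alpha$ via chosen sheets $\wt{U}_\alpha\subset\wt{M}$, note that the resulting transition functions $g_{\alpha\beta}(x,m)=\rho_x(\gamma_{\alpha\beta})$ are locally constant in $m$, and glue the local forms $\sum_\alpha \psi_\alpha\, g_{\alpha\beta}^{-1}dg_{\alpha\beta}$ by a partition of unity on $M$. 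Your key observation (local constancy in $m$) is the $1$--form counterpart of the paper's observation that $\wt{D}$ is $\Gamma$--invariant, and both arguments ultimately invoke a partition of unity (yours explicitly, the paper's hidden in the construction of the invariant metric). Your approach makes the ``only $dx$--components'' structure of the connection form completely explicit from the outset, which dovetails nicely with the Chern--Weil computation in the proof of Theorem~\ref{vanishing2}; the paper's approach is more coordinate--free and makes the holonomy computation slightly more transparent.
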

 
 \begin{proof}
  In the Ehresmann formulation, a connection on a $\GL_n (\bbC)$-bundle $P$ is a $\GL_n (\bbC)$--invariant splitting $TP = V + H$ into a sum of smooth subbundles, where $V$ is the vertical bundle tangent to the fibers of $P$.  We call $H$  the horizontal bundle.

Let $\wt{E} = X \times \wt{M} \times \GL_n (\bbC)$, and let $\wt{D} \subset T\wt{E}$ be the smooth subbundle of tangents to the submanifolds $\{x\} \times \wt{M} \times \{A\}$ for all fixed $ (x,A) \in X \times \GL_n (\bbC)$. Then $\wt{D}$ is invariant under both $\GL_n (\bbC)$ and $\Gamma$, so it descends to a smooth, $\GL_n (\bbC)$-equivariant subbundle $ D \subset TE_{\rho} (M)$, which intersects the vertical bundle $V \subset TE_{\rho} (M)$ trivially.  The total space of   $E_{\rho} (M)$ admits a $\GL_n (\bbC)$--invariant Riemannian metric\footnote{To construct such a metric on a principal $G$--bundle $P$ over a manifold $N$, where $G$ is a Lie group, note that if $P$ is trivial over $U\subset N$ then 
$T(P|_{U}) \isom T(U) \cross T_e (G) \cross G,$ and
any choice of metric on $T(U) \oplus ( T_e (G)\cross U)$ gives rise, under   translation by $G$, to a $G$--invariant inner product on $T(P|_{U})$.  These inner products can be pasted together using a partition of unity on $N$.},
and relative to this metric, we define an Ehresmann connection 
$$H := D + (D + V)^{\perp}$$ 
on $E_{\rho} (M)$. The restriction of $H$ to a slice $\{x\} \times M \subset X \times M$ is a flat connection on $E_{\rho(x)}$, and we must show that the holonomy of this connection (computed at $[x, \wt{m_0}, I] \in E_{\rho(x)} (M)$) is  precisely
$\rho(x)$.
To check this, note that for any loop $\alpha\co ([0,1], \{0,1\}) \to (M, m_0)$, 
the horizontal lift of $\alpha$ to $E_{\rho} (M)$ is simply
$$t\goesto [x, \wt{\alpha} (t), I],$$
where $\wt{\alpha}$ is the unique lift of $\alpha$ to $\wt{M}$ satisfying $\wt{\alpha} (0) = \wt{m}_0$.  We now have
$$[x, \wt{\alpha}(1), I] \cdot \rho_x ([\alpha])= [x, \wt{\alpha}(1), \rho_x ([\alpha])] = [x, \wt{\alpha}(1) \cdot [\alpha]^{-1}, I] = [x, \wt{m}_0, I]$$
as desired.  (The last equality relies on the choice made in defining $\wt{M}$.)
\end{proof}

We   need the following basic result regarding cohomology in characteristic zero.

\begin{lemma}\label{Thom} Let $Y$ be a topological space.  Then there exists a set of smooth, closed manifolds $\{M_k\}_{k\in K}$, and a map $f\co M := \coprod_k M_k \to Y$ such that for each $j\geqs 0$,
$$f^*\co H^j (Y; \bbQ) \to H^j (M; \bbQ)\isom \prod_k H^j(M_k; \bbQ)$$
is injective.  If $H^j (Y; \bbQ) = 0$ for $j>m$, we can assume $\dim (M_k) \leqs m$ for all $k$.
\end{lemma}
\begin{proof} Choose a basis $\{x_k\}_k$ for $H_* (Y; \bbQ) = \bigoplus_{j\geqs 0} H_j (Y; \bbQ)$ as an $\bbQ$--vector space.  By Thom's theorem~\cite{Thom} (see also Ga{\u\i}fullin~\cite{Gaifullin-realization-2, Gaifullin-realization-1}) for each $k$ there exists a map $f_k\co M_k\to Y$, with $M_k$  a closed, smooth, orientable manifold, such that $(f_k)_*([M_k]) = q_k x_k$ for some non-zero $q_k\in \bbQ$ (where $[M_k]$ is the fundamental class), and if  $H^j (Y; \bbQ) = 0$ for $j>m$, then $\dim (M_k) \leqs m$.  Letting $(f_k)_*^\vee$ denote the $\bbQ$--linear dual of $(f_k)_*$, the Universal Coefficient Theorem for cohomology yields a  commutative diagram 
$$\xymatrix{ H^j(Y; \bbQ) \ar[r]^-\isom  \ar[d]^{\prod_k (f_k)^*}  & \Hom_\bbQ (H_j (Y; \bbQ), \bbQ) \ar[d]^{\prod_k (f_k)_*^\vee} \\
		\prod_k H^j (M_k; \bbQ) \ar[r]^-\isom  &\prod_k \Hom_\bbQ (H_j (M_k; \bbQ), \bbQ).
		}
$$		
Since $q_k\neq 0$ for all $k$, the map $\prod_k (f_k)_*^\vee$ is injective, and from the diagram, $\prod_k (f_k)^*$ is injective also.  Finally, 
$M \xmaps{\coprod_k f_k} Y$
induces $\prod_k (f_k)^*$ under the isomorphism $H^j(M;\bbQ) \isom \prod_k H^j(M_k; \bbQ)$.
\end{proof}

We now come to the main result of this section.  This result extends a result of Grothendieck \cite[Corollaire 7.2]{Grothendieck-Chern-classes}, which corresponds to the case $X = \{*\}$.  We will say that a space $Y$ has \e{finite type} if $H_j (Y; \bbZ)$ is finitely generated for each $j\geqs 0$.

\begin{theorem} $\label{vanishing2}$
Let $Z$ be a path connected topological space and let 
$$\rho\co X\to \Hom(\pi_1 (Z, z_0), \GL_n (\bbC))$$
be a family of representations parametrized by a space $X$ with $H^j (X; \bbQ) = 0$ for $j>m$.  Assume that either $X$ or $Z$ has finite type.  Then for $i>0$, the  classes 
$$c_{m+i} (E_{\rho} (Z)) \in H^{2m+2i} (X\cross Z; \bbZ)$$
map to zero in $H^{2m+2i} (X\cross Z; \bbQ)$.

In particular, if $\Gamma$ is a finitely generated discrete group and $H^j (X; \bbQ) = 0$ for $j>m$, then for any map $\rho\co X \to \Hom(\Gamma, \GL_n (\bbC))$, the classes $c_{m+i} (E_\rho)\in  H^{2m+2i} (X \cross B\Gamma; \bbZ)$ (with $i>0$) map to zero in $H^{2m+2i} (X\cross B\Gamma; \bbQ)$.
\end{theorem}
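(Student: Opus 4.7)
The plan is to reduce to the case where both the parameter space and the base are smooth closed manifolds, smooth out the family there, and then exploit the bidegree decomposition of the curvature of a fiberwise flat connection.

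First, I would apply Lemma \ref{Thom} to both $X$ and $Z$, producing disjoint unions of smooth closed manifolds $M = \coprod_k M_k$ (with $\dim M_k \leqs m$) and $N = \coprod_l N_l$, together with maps $f\co M \to X$ and $g\co N \to Z$ whose induced rational cohomology maps are injective. The finite type hypothesis on $X$ or $Z$ makes the K\"unneth formula an isomorphism, so $(f\times g)^*\co H^*(X\times Z;\bbQ)\to H^*(M\times N;\bbQ)$ is injective; it thus suffices to show the pulled-back class vanishes on each component $M_k\times N_l$.

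By the naturality statement in Lemma \ref{compatible}(1), $(f\times g)^* E_\rho(Z) \iso E_{\rho'}(N)$ where $\rho' = g^{\#}\circ \rho\circ f$. Restricting yields a family $\rho'_{kl}\co M_k \to \Hom(\pi_1(N_l,*), \GL_n(\bbC))$. Since $N_l$ is compact, $\pi_1(N_l,*)$ is finitely generated and the representation space is a real algebraic set, so Theorem \ref{smoothing} provides a smooth family $\rho''_{kl}$ homotopic to $\rho'_{kl}$, and Lemma \ref{compatible}(2) ensures that the associated bundles are isomorphic. The problem is thereby reduced to showing $c_{m+i}(E_{\rho''_{kl}}(N_l)) = 0$ in $H^{2m+2i}(M_k\times N_l;\bbQ)$ for the smooth family $\rho''_{kl}$.

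The crux is a Chern--Weil computation. By Lemma \ref{fwf-conn}, the smooth principal $\GL_n(\bbC)$-bundle $E_{\rho''_{kl}}(N_l)$ carries a fiberwise flat connection, and its curvature $\Omega$ has vanishing $(0,2)$-component with respect to the decomposition $T(M_k\times N_l) = TM_k\oplus TN_l$. Hence each matrix entry of $\Omega$ is a sum of forms of bidegree $(2,0)$ and $(1,1)$, so has $M_k$-degree at least $1$. Since $c_{m+i}$ is a universal polynomial of degree $m+i$ in the entries of $\Omega$, every monomial in its Chern--Weil representative has $M_k$-degree at least $m+i > \dim M_k$ and therefore vanishes identically as a form on $M_k\times N_l$. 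Thus $c_{m+i}(E_{\rho''_{kl}}(N_l)) = 0$ in $H^{2m+2i}(M_k\times N_l;\bbR)$, and a fortiori rationally. Pooling over $(k,l)$ and applying injectivity of $(f\times g)^*$ proves the first statement; the second follows by specializing to $Z = B\Gamma$, where finite generation of $\Gamma$ guarantees that $\Hom(\Gamma,\GL_n(\bbC))$ is a real algebraic set and hence that Theorem \ref{smoothing} applies in the reduction step.

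I expect the main obstacle to be bookkeeping rather than substance: composing the naturality, smoothing, and fiberwise-flatness results across infinitely many components, and ensuring that the K\"unneth step is used correctly with the finite type hypothesis. The Chern--Weil bidegree argument itself is the conceptual heart but is short and formal once the reduction to smooth closed manifolds is in place.
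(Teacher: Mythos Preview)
Your proposal is correct and follows essentially the same route as the paper: reduce via Lemma~\ref{Thom} and the K\"unneth theorem to smooth closed manifolds $M_k\times N_l$, invoke Lemma~\ref{compatible} for naturality, apply Theorem~\ref{smoothing} (using that $\pi_1(N_l)$ is finitely generated) and Lemma~\ref{fwf-conn}, and then run the Chern--Weil bidegree argument on the fiberwise flat connection. The only omission is the minor basepoint adjustment (ensuring $z_0$ lies in the image of each $g_l$ so that $g^\#$ is defined), which you rightly flag as bookkeeping.
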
   

\begin{proof}  In the proof,  $H^*(-)$ will denote rational cohomology. By abuse of notation, we denote the images of Chern classes in de Rham cohomology by $c_i (-)$.

We begin by reducing to the case in which $X$ and $Z$ are closed, connected, smooth manifolds.
  By Lemma~\ref{Thom}, there exist families of smooth, closed manifolds $\{M_k\}_k$ and $\{N_j\}_j$ (with $\dim (M_k)\leqs m$ for each $k$), and maps $M_k \xmaps{f_k} X$ and $N_j \xmaps{g_j} Z$ 
such that $f = \coprod_k f_k$ and $g = \coprod_j g_j$ induce injections on $H^* (-)$.  
By the K\"unneth Theorem for cohomology (Spanier~\cite[Theorem 5.6.1]{Spanier}), the map
$$H^*(X\cross Z) \xmaps{\coprod_{k,j} (f_k \cross g_j)^*} H^* \left(\coprod_{k, j} M_k \cross N_j\right)  \isom \prod_{k, j} H^*(M_k \cross N_j)$$
is injective also.
Hence to show that a class in $x\in H^*(X\cross Z)$ is zero, it suffices to show that $(f_k \cross g_j)^* (x) = 0$ for all $k, j$.
Since $Z$ is path connected and the $N_j$ are smooth manifolds, we may assume that $z_0$ is in the image of $g_j$ for each $j$ (by replacing $g_j$ with a homotopic map if necessary).

By part (1) of Lemma~\ref{compatible}, we have
$$ (f_k \cross g_j)^* ( c_{m+i} (E_{\rho} (Z))) = c_{m+i} (E_{g_j^\# \circ \rho \circ f_k} (N_j)).$$
Assuming the theorem  for the connected, closed, smooth manifolds $N_j$ and $M_k$, we see that $(f_k \cross g_j)^* ( c_{m+i} (E_{\rho} (Z))) = 0$ for each $i>0$ and each $j, k$.
Hence  
$c_{m+i} (E_\rho (Z)) = 0$ in $H^*(X\cross Z; \bbQ)$.

Now we consider the case in which $Z$ and $X$ are  closed, connected, smooth  manifolds.  Let $\Gamma = \pi_1 (Z, z_0)$, and note that $\Gamma$ is finitely generated, so as above we may view $\Hom(\Gamma, \GL_n (\bbC))$ as a real algebraic set.
 By Theorem~\ref{smoothing} and part (2) of Lemma~\ref{compatible}, we may assume without loss of generality that $\rho$ is smooth.

We apply Chern-Weil theory to the fiberwise flat connection $H$ on $E_{\rho} (Z)$ guaranteed by Lemma~\ref{fwf-conn}.  Choose local coordinates $\{x_1,\ldots ,x_m, z_1,\ldots ,z_p\}$ on $X \times Z$ such that the slices $\{x\} \times Z$ are defined locally by setting the $x_i$ coordinates equal to constants. The curvature of  $H$ is defined locally by an $n\times n$ matrix $F$ of differential 2--forms $F_{i,j} \in \Gamma( \wedge^2(T^*(X\cross Z))\otimes \C) $. The fact $H$ is flat along slices $\{x\} \times Z$ implies that $F_{i,j}$ is a (local) section of the subbundle of $\wedge^2(T^*(X\cross Z)\otimes \C)$ spanned by sections of the form $ dx_i \wedge dx_j$ and $dx_i \wedge dz_j$. Consequently, any product of $m+1$ matrix entries from $F$  vanishes. The Chern-Weil form representing $c_{m+i} (E_\rho (Z))$ is  a symmetric polynomial of degree $m+i$ in the entries of $F$.  It follows that for $i>0$ the Chern class $c_{m+i} ( E_{\rho} (Z) )$ is represented in de Rham cohomology by the zero form, and hence $c_{m+i}(E_{\rho} (Z)) = 0$ in $H^*(X\cross Z; \bbR)$ (and also in $H^*(X\cross Z; \bbQ)$). 
\end{proof}

\section{Proof of Theorem \ref{cokernel-AS}}$\label{TAS}$
Let $\Gamma$ be a discrete group.  Consider the natural map
\begin{equation}\label{B}\Hom(\Gamma, \GL_n (\bbC)) \srm{B} \Map_* (B\Gamma, B\GL_n (\bbC))\end{equation}
sending $\rho\co \Gamma \to \GL_n (\bbC)$ to the induced map $B\rho \co B\Gamma\to B\GL_n (\bbC)$ between simplicial classifying spaces.  Note that this map can be defined analogously for any Lie group $G$, and is always continuous (see Ramras~\cite[Section 3]{Ramras-stable-moduli}).

\begin{lemma}\label{B-Erho}
Let $X$ be a space, let $\Gamma$ be a discrete group, and let $G$ be a Lie group.  For every family of representations $\rho\co X \to \Hom(\Gamma, G)$, the principal $G$--bundle $E_\rho\to X \cross B\Gamma$ is classified by the composite
\begin{eqnarray}\label{u}X\cross B\Gamma \xmaps{\rho\cross \Id} \Hom(\Gamma, G) \cross B\Gamma \hspace{1.7in}\\
 \hspace{1.4in} \xmaps{B\cross \Id} \Map_* (B\Gamma, BG) \cross B\Gamma \srm{\mathrm{ev}} BG.\notag
 \end{eqnarray}
\end{lemma}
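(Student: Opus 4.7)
The plan is to construct an explicit $U(n)$-equivariant bundle map $\bar\Phi\co E_\rho \to EU(n)$ covering the composite~(\ref{u}); since any such equivariant map identifies $E_\rho$ as the pullback of the universal bundle along the induced map of base spaces, this will establish that~(\ref{u}) is a classifying map for $E_\rho$.

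For each $\sigma \in \Hom(\Gamma, U(n))$, functoriality of the simplicial bar construction produces a $\sigma$-equivariant map $E\sigma\co E\Gamma \to EU(n)$ (meaning $E\sigma(e\cdot\gamma) = E\sigma(e)\cdot\sigma(\gamma)$) which covers $B\sigma\co B\Gamma\to BU(n)$. Since $B$ is the geometric realization of a continuous simplicial construction (compare Ramras~\cite[Section 3]{Ramras-stable-moduli}), the assignment $(\sigma, e) \mapsto E\sigma(e)$ gives a continuous map $\Hom(\Gamma, U(n)) \cross E\Gamma \to EU(n)$. Precomposing with $\rho\cross \Id$ and extending over the $U(n)$-factor by right multiplication, I will define
$$\Phi\co X \cross E\Gamma \cross U(n) \maps EU(n), \quad \Phi(x, e, A) = E\rho_x(e)\cdot A,$$
and check that $\Phi$ is invariant under the $\Gamma$-action $(x, e, A)\cdot \gamma = (x, e\gamma, \rho_x(\gamma)^{-1}A)$ and equivariant under right $U(n)$-multiplication on the last factor. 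The $\Gamma$-invariance follows from the calculation
$$\Phi(x, e\gamma, \rho_x(\gamma)^{-1}A) = E\rho_x(e\gamma)\cdot\rho_x(\gamma)^{-1}A = E\rho_x(e)\cdot\rho_x(\gamma)\cdot\rho_x(\gamma)^{-1}A = \Phi(x,e,A),$$
while $U(n)$-equivariance is immediate.

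Consequently $\Phi$ descends to a continuous, $U(n)$-equivariant map $\bar\Phi\co E_\rho \to EU(n)$, which is automatically fiber-preserving and hence a principal bundle map. Its induced map on base spaces sends $(x, b)\in X\cross B\Gamma$ to $B\rho_x(b)\in BU(n)$, which is exactly the composite~(\ref{u}). The $\GL_n (\bbC)$ case is formally identical, with $U(n)$ replaced throughout by $\GL_n (\bbC)$ and $EU(n)$, $BU(n)$ replaced by the corresponding simplicial classifying objects. The only real technical point is the continuity of the parameterized construction $\sigma \mapsto E\sigma$, but since $E\sigma$ and $B\sigma$ are realizations of the same simplicial datum, this reduces to the already-cited continuity of $B$.
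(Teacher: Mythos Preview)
Your proof is correct and follows essentially the same approach as the paper: the map $\bar\Phi$ you construct is exactly the map $\wt{u}_\rho$ appearing in the paper's proof, defined by the same formula $[x,e,A]\mapsto E(\rho_x)(e)\cdot A$, and the verification of $\Gamma$-invariance and $U(n)$-equivariance is identical. Your treatment is slightly more explicit about the continuity of the parameterized map $\sigma\mapsto E\sigma$, which the paper leaves implicit.
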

\begin{proof}
For each point $x\in X$, the homomorphism $\rho_x = \rho(x)$ induces a map
$E(\rho_x) \co E\Gamma\to EG$
between the simplicial models for the universal bundles (we use the conventions described in Ramras~\cite[Section 2]{Ramras-excision}).  This map is equivariant with respect to the right actions of $\Gamma$ and $G$ on these bundles, in the sense that for each $\gamma\in \Gamma$ we have
$E(\rho_x) (e\cdot \gamma) = \left(E(\rho_x) (e)\right) \cdot \rho_x (\gamma)$.

We now have a commutative diagram
\begin{equation}\label{u-diag}
\xymatrix{E_\rho \ar[r]^-{\wt{u}_\rho} \ar[d]   & EG  \ar[d] \\
		X\cross B\Gamma \ar[r]^-{u_\rho}& BG,
	}
\end{equation} 
where $u_\rho$ is the composition (\ref{u}) and the map   $\wt{u}_\rho$ is defined by 
$\wt{u}_\rho([x, e, g]) = (E(\rho_x)(e))\cdot g$.
Equivariance of $E(\rho_x)$ implies that $\wt{u}_\rho$ is well-defined.
Furthermore, $\wt{u}_\rho$ is $G$--equivariant, so Diagram (\ref{u-diag}) is a pullback diagram.
\end{proof}

We write $c_i$ for the $i$th Chern class, considered as a  \e{rational} cohomology class, and we will write $\Ch$ for the Chern character.

\begin{lemma}$\label{Chern}$
Let $X$ be a finite CW complex.  Given $m>0$ and  $x\in H^{2i} (S^m \sm X; \bbQ)$, there exists a  class $\phi \in K^0 (S^m \sm X)$ such that $c_i (\phi) = q x$ for some non-zero rational number $q\in \bbQ^*$, and $c_j (\phi) = 0$ for $j\neq i$.
\end{lemma}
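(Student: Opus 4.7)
The plan is to use two ingredients: (i) the rational Chern character isomorphism, and (ii) the fact that $S^m \sm X$, being a co-H-space for $m > 0$, has trivial reduced cup products. The second ingredient lets us invert Newton's identities on a suspension, expressing each Chern class linearly in terms of the corresponding component of the Chern character; prescribing the Chern character in a single degree will then pin down all of the Chern classes. (The case $i=0$ is trivial via a trivial bundle, so I will focus on $i\geqs 1$.)

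For any $\phi \in \wt{K}^0(S^m \sm X)$, the power-sum components $p_j(\phi) = j!\,\mathrm{Ch}_j(\phi)$ of the Chern character satisfy Newton's identities
$$p_j(\phi) = \sum_{k=1}^{j-1}(-1)^{k-1} c_k(\phi)\,p_{j-k}(\phi) + (-1)^{j-1} j\,c_j(\phi).$$
Since $S^m \sm X \heq \Sigma^m X$ is a suspension, its reduced diagonal is nullhomotopic, so every cup product of two positive-degree reduced classes vanishes in $\wt{H}^*(S^m\sm X; \bbQ)$. Consequently each product term $c_k(\phi)\,p_{j-k}(\phi)$ is zero, and the identities collapse to
$$c_j(\phi) = (-1)^{j-1}(j-1)!\,\mathrm{Ch}_j(\phi) \quad \textrm{for all } j\geqs 1.$$

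Next, I would invoke the standard fact that the Chern character gives a rational isomorphism $\wt{K}^0(Y)\otimes\bbQ \xrightarrow{\sim} \wt{H}^{\mathrm{even}}(Y;\bbQ)$ for any finite CW complex $Y$. Applied to $Y = S^m \sm X$ with the given class $x\in H^{2i}(S^m\sm X;\bbQ)$ placed purely in degree $2i$, this yields $\phi_{\bbQ}\in \wt{K}^0(S^m\sm X)\otimes\bbQ$ with $\mathrm{Ch}(\phi_{\bbQ})$ equal to $x$ in degree $2i$ and zero in every other degree. Since $\wt{K}^0(S^m\sm X)$ is finitely generated, there exist a nonzero integer $N$ and a class $\phi\in \wt{K}^0(S^m\sm X)$ whose image in $\wt{K}^0(S^m\sm X)\otimes\bbQ$ equals $N\phi_{\bbQ}$; hence $\mathrm{Ch}(\phi) = Nx$ remains concentrated purely in degree $2i$. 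Plugging into the collapsed Newton formula gives $c_i(\phi) = (-1)^{i-1}(i-1)!\,Nx = qx$ with $q := (-1)^{i-1}(i-1)!\,N \in \bbQ^{*}$, and $c_j(\phi) = 0$ for all $j\neq i$.

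The only conceptual point is the observation that Newton's identities collapse on a suspension; I expect this to be the main (minor) subtlety to verify, since one must check that each surviving product term $c_k(\phi)\cdot p_{j-k}(\phi)$ indeed pairs two strictly-positive-degree reduced classes so the suspension vanishing applies. Everything else is bookkeeping together with the standard rational Chern character isomorphism.
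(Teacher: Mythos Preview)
Your proof is correct and follows essentially the same approach as the paper: both use the rational Chern character isomorphism to find $\phi$ with $\Ch(\phi)$ concentrated in degree $2i$, and both exploit the vanishing of cup products on the suspension $S^m\sm X$ to conclude that $\Ch(\phi)$ is a nonzero rational multiple of each $c_j(\phi)$ in degree $2j$. The paper leaves the collapse of the Chern character polynomial implicit, whereas you spell it out via Newton's identities and give the explicit constant $q=(-1)^{i-1}(i-1)!\,N$.
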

\begin{proof} Since  $\Ch\co K^0 (S^m \sm X) \to  H^{\text{even}} (S^m \sm X; \bbQ)$ is a rational isomorphism, there exists $\phi\in K^0 (S^m \sm X)$ such that $\Ch (\phi) = q x$ for some $q\in \bbQ^*$.  By definition, $\Ch(\phi)$ is a polynomial in the Chern classes of $\phi$, but since all cup products in $H^* (S^m \sm X; \bbQ)$ are zero, $\Ch(\phi)$ is just a rational linear combination of the Chern classes of $\phi$ and the result follows.
\end{proof}

\nd {\bf Proof of Theorem~\ref{cokernel-AS}.}
The map $B\GL_n (\bbC) \to B\GL_{n+1} (\bbC)$ is $(2n+1)$--connected, and analyzing the cofiber sequences $\bigvee S^{k-1} \to X^{(k-1)} \to X^{(k)}$ associated to the skeletal filtration of a $d$--dimensional CW complex $X$, one can show by induction on $d$ that
 for any compatible basepoints and any $0\leqs  m\leqs 2n-d$,  
\begin{equation}\label{maps}\pi_m \Map_* (X, B\GL_n (\bbC)) \isom  \pi_m \Map_* (X, B\GL_\infty (\bbC))  \isom \wt{K}^{-m} (X).\end{equation}

We begin by considering  $B_*$ with respect to the trivial basepoints, namely the trivial representation $1$ and $c=B(1)$, which is the constant map to the basepoint of $B\GL_n (\bbC)$.
For each $i\geqs 1$, choose a $\bbQ$--basis
$$\{a_{ij}\}_j \subset H^{m+2i} (B\Gamma; \bbQ) \isom H^{2m + 2i} (S^m \sm B\Gamma; \bbQ).$$  
By Lemma~\ref{Chern}, there exist classes $A_{ij} \in K^0 (S^m\sm B\Gamma)$ such that 
$$c_{l} (A_{ij}) = \begin{cases} q_{ij} a_{ij} \textrm{ for some } q_{ij}\in \bbQ^*, & \textrm{  if }  l = m+i ,\\
						 0 , &  \textrm{  if }   l\neq m + i.  
						\end{cases}$$
Note that without loss of generality, we may assume $A_{ij} \in \wt{K}^0 (S^m\sm B\Gamma)$.
Set
$$A = \Span_\bbZ (\{A_{ij}\}_{i,j}) \subset \wt{K}^0 (S^m\sm B\Gamma)\isom \pi_m (\Map_*(B\Gamma, B\GL_n (\bbC)), c).$$  
Since $\{\Ch(A_{ij})\}_{i,j} = \{q_{ij} a_{ij}\}_{i,j}$ is  linearly independent over $\bbQ$, $A$ is a free abelian group of rank $\sum_{i= 1}^\infty \beta_{m+2i} (B\Gamma)$, so to complete the proof in this case it suffices to check that $A\cap \Img (B_*) = \{0\}$.

If $x = \sum_{i,j} n_{ij} A_{ij}$ is a non-trivial linear combination of the $A_{ij}$, then 
$$\Ch (x) =  \sum_{i,j} n_{ij} Ch(A_{ij}) = \sum_{i,j} n_{ij} q_{ij} a_{ij}  \neq 0.$$ 
Note that $\Ch(x)$ lies in $\bigoplus_{i= 1}^\infty H^{2m+2i} (S^m\sm B\Gamma; \bbQ)$, and as observed above
$\Ch(x)$ is a rational linear combination of the Chern classes $c_j (x)$.  Hence  for some $i>0$
$$c_{m+i} (x)\in H^{2m+2i} (S^m \sm B\Gamma; \bbQ)$$
 must be non-zero.  
On the other hand, by Lemma~\ref{B-Erho}, classes in the image of $B_*$ classify bundles of the form
 $E_\rho$, and  by Theorem~\ref{vanishing2} we have $c_{m+i} (E_\rho)\in H^*(S^m\cross B\Gamma; \bbQ)$ is trivial for $i>0$.  So  $A\cap \Img (B_*)=\{0\}$.

Now consider general basepoints $\rho$ and $B\rho$.  Using the isomorphisms~(\ref{maps}), we may translate $A$ to a subgroup $A_\rho \subset \pi_m (\Map_*(B\Gamma, B\GL_n (\bbC)), B\rho)$ using Whitney sum with $q^*(E_\rho)$, where $q\co S^m \cross B\Gamma\to B\Gamma$ is the projection. 
Since $E_\rho$ is flat, $c_i (E_\rho) = 0\in H^{2i} (B\Gamma; \bbQ)$, and the same is true for $c_i (q^*(E_\rho))$.  Hence for each non-zero class $a\in A_\rho$ we again have  $c_{m+i}(a) \neq 0$ for some $i>0$, and the rest of the proof proceeds as above.
$\hfill \Box$

\section{The space of flat connections}$\label{flat-sec}$
In this section, we use Theorem~\ref{cokernel-AS} to  study the homotopy types of spaces of flat connections on principal $U(n)$--bundles over  closed, aspherical, smooth manifolds.   

\subsection{Homotopy pullbacks}

We  review some definitions and basic facts.

\begin{definition} The homotopy pullback of a diagram
$X\srm{f} Y \stackrel{g}{\longleftarrow} Z$
is the space
$$\mcP = \mcP(X\to Y\leftarrow Z) = \{(x, \gamma, z) \in X\cross Y^I \cross Z \,|\,  \gamma(0) = f(x), \gamma(1) = g(z)\},$$
topologized as a subspace of $X\cross Y^I\cross Z$, where $Y^I = \Map ([0,1], Y)$.
Given a commutative square 
\begin{equation}\label{square} \xymatrix{ W \ar[r] \ar[d]^-h &Z\ar[d]^-g\\ X \ar[r]^-f & Y,}\end{equation}
there is a natural map $W\to \mcP$.  If this map is a weak equivalence (meaning that it induces an isomorphism on homotopy groups for all compatible choices of basepoints), we call the diagram (\ref{square})
\emph{homotopy cartesian}.
\end{definition}

\begin{definition} The homotopy fiber of a continuous map $f\co X \to Y$ at a point $y\in Y$ is the space
$\hofib_y (f) = \mcP(X\srm{f} Y \leftarrow \{y\})$.
\end{definition}

For any compatible choice of basepoints, there is a long exact sequence 
\begin{equation}\label{LES}\cdots \maps \pi_* \hofib_y (f) \maps \pi_* X \maps \pi_* Y \srm{\partial} \pi_{*-1} \hofib (f) \maps \cdots
\end{equation}
resulting from the fact that $\hofib_y (f)$ is the fiber (over $y$) of the fibration
$$P_f = \mcP (X\srm{f} Y \stackrel{\Id}{\lmaps} Y) \srm{p_f} Y$$
 associated to $f$ (here $p_f$ is the projection to the last factor).
If $Y$ is disconnected (which is often the case for representation spaces), then $\hofib_y (f)$ depends only on the inverse image, under $f$, of the path component $[y]$ of $Y$ containing $y$: 
\begin{equation}\label{comp}\hofib_y(f) = \hofib_y (f|_{f^{-1} ([y])}).\end{equation}

\begin{proposition}$\label{h-cart}$
The square (\ref{square}) is homotopy cartesian if and only if 
the natural map $\hofib_x (h) \to \hofib_{f(x)} (g)$ is a weak equivalence for each $x\in X$.
\end{proposition}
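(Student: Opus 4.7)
The plan is to reduce the proposition to a statement about a fibration and then apply the standard fibration-comparison lemma.

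First, I would verify that the projection $p\co \mcP \to X$, $(x, \gamma, z) \goesto x$, is a Hurewicz fibration whose fiber over $x$ is $\hofib_{f(x)}(g)$.  This is obtained by recognizing $\mcP$ as the pullback along $f$ of the standard mapping path-space fibration
$$Q := \{(\gamma, z) \in Y^I \cross Z \,|\, \gamma(1) = g(z)\} \maps Y, \quad (\gamma, z) \goesto \gamma(0),$$
which is the Hurewicz replacement of $g$.  The canonical comparison map $\phi\co W \to \mcP$, sending $w$ to $(h(w), c_{f(h(w))}, w')$ where $w'$ is the image of $w$ under the top arrow of (\ref{square}), satisfies $p\circ \phi = h$, and unravelling the definitions shows that the induced map on homotopy fibers $\hofib_x(h) \to \hofib_x(p) = \hofib_{f(x)}(g)$ is precisely the natural map in the statement.

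For the forward direction, if $\phi$ is a weak equivalence then so is the fiber comparison over any $x \in X$: this is the standard homotopy-invariance of the homotopy pullback, applied to the two diagrams $\{x\} \to X \leftarrow W$ and $\{x\} \to X \leftarrow \mcP$.

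For the converse, assume the fiber comparison is a weak equivalence for every $x$.  Replace $h$ by its mapping path fibration $\tilde h\co \tilde W \to X$, where
$$\tilde W = \{(w, \alpha) \in W \cross X^I \,|\, \alpha(0) = h(w)\}$$
and $\tilde h(w, \alpha) = \alpha(1)$; then $W \heq \tilde W$, and the strict fiber $\tilde h^{-1}(x)$ coincides with $\hofib_x(h)$.  Using the path-lifting property of $p$, the map $\phi$ extends to a map $\tilde\phi \co \tilde W \to \mcP$ of fibrations over $X$, and on fibers over each $x$ it recovers the hypothesized weak equivalence $\hofib_x(h) \to \hofib_{f(x)}(g)$.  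The standard lemma that a map of fibrations over a common base inducing weak equivalences on all fibers is itself a weak equivalence --- proved by basepointwise application of the five lemma to the long exact sequences of the two fibrations, together with a $\pi_0$-level check that every component of the target is hit --- now shows that $\tilde\phi$, and hence $\phi$, is a weak equivalence.

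The main obstacle is the bookkeeping in the disconnected case: the hypothesis must be invoked at every $x \in X$, and the $\pi_0$-level component count must match between $\mcP$ and $\tilde W$ over each component of $X$.  A secondary subtlety is the verification that the fiber-level map induced by $\phi$ agrees with the natural map in the statement, but this is just a matter of unwinding the definitions of $\mcP$ and the two homotopy fibers.  Once these points are handled, both directions reduce to homotopy invariance of the homotopy pullback and the standard fibration-comparison lemma.
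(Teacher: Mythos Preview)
Your approach is correct but differs from the paper's. The paper gives a one-line argument: for every point $(x,\gamma,z)\in\mcP$ there is a natural homeomorphism
\[
\hofib_{(x,\gamma,z)}\bigl(W\to\mcP\bigr)\;\homeo\;\hofib_{(z,\overline{\gamma})}\bigl(\hofib_x(h)\to\hofib_{f(x)}(g)\bigr),
\]
so the map $W\to\mcP$ has all homotopy fibers weakly contractible if and only if every comparison map $\hofib_x(h)\to\hofib_{f(x)}(g)$ does, and both directions fall out at once from the characterization of weak equivalences via homotopy fibers. Your route---replace $h$ by its path fibration, recognize $\mcP\to X$ as a fibration with fiber $\hofib_{f(x)}(g)$, then invoke the fibration-comparison/five-lemma argument---is the more textbook one and is perfectly valid; it makes the verification of the fiber-level map explicit and keeps the $\pi_0$ bookkeeping visible, which is helpful in the disconnected setting the paper cares about. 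The paper's argument is shorter and more symmetric (no separate forward and converse arguments), at the cost of requiring the reader to check the iterated homotopy-fiber homeomorphism. Either is fine here; for a background proposition of this kind the paper's compression is appropriate, but your write-up would be more self-contained if one wanted full detail.
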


This may be proven by noting that there is a natural homeomorphism
$$\hofib_{(x, \gamma, z)} (W\to \mcP) \homeo \hofib_{(z, \overline{\gamma})} \left(\hofib_x (h) \to \hofib_{f(x)} (g)\right)$$
for each point $(x,\gamma, z)\in \mcP$.  Here $\overline{\gamma}$ denotes the path $\overline{\gamma} (t) = \gamma(1-t)$.

The following result follows from Hirschhorn~\cite[Theorems 13.1.11 and 13.3.4]{Hirschhorn}.

\begin{proposition}$\label{htpy-inv-fibers}$ A diagram of spaces
$$\xymatrix{ X\ar[r] \ar[d]^f & Y \ar[d]^g & Z \ar[l] \ar[d]^h \\ X'\ar[r] & Y'   & Z'\ar[l] }$$
induces a map $\Phi: \mcP \to \mcP'$ between the homotopy pullbacks of the rows, and if $f, g$, and $h$ are weak equivalences, so is $\Phi$.
\end{proposition}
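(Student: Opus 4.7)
The plan is to show $\Phi$ induces isomorphisms on all homotopy groups by applying the Five Lemma twice. Write the top row as $X \srm{a} Y \stackrel{b}{\lmaps} Z$ and the bottom as $X' \srm{a'} Y' \stackrel{b'}{\lmaps} Z'$, and define $\Phi \co \mcP \to \mcP'$ by $\Phi(x, \gamma, z) = (f(x),\, g\circ \gamma,\, h(z))$. Commutativity of the given square guarantees that $\Phi$ lands in $\mcP'$ and that $p_1' \circ \Phi = f \circ p_1$, where $p_1 \co \mcP \to X$ and $p_1' \co \mcP' \to X'$ denote projection onto the first coordinate.

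The key identification is a natural weak equivalence
\[
\hofib_{x_0}(p_1) \heq \hofib_{a(x_0)}(b)
\]
for each $x_0 \in X$: both spaces consist of a path $\gamma$ in $Y$ starting at $a(x_0)$ together with a point $z \in Z$ satisfying $b(z) = \gamma(1)$, up to the standard adjustment involving a path in $X$ appearing in the definition of $\hofib$. The primed analogue holds, and $\Phi$ restricts on these fibers to the evident comparison map $\hofib_{a(x_0)}(b) \to \hofib_{a'(f(x_0))}(b')$ induced by the commutative square with vertical maps $g$ and $h$. A first application of the Five Lemma to the long exact sequences of the fibration sequences $\hofib(b) \to Z \to Y$ and $\hofib(b') \to Z' \to Y'$, using that $g_*$ and $h_*$ are isomorphisms, shows that this comparison map is itself a weak equivalence.

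A second application of the Five Lemma to the ladder formed by the long exact sequences (\ref{LES}) associated to $p_1$ and $p_1'$ --- with vertical maps induced by $\Phi$, $f$, and the fiber map just shown to be a weak equivalence --- then yields that $\Phi_*$ is an isomorphism at the chosen basepoint. Since the basepoint of $\mcP$ was arbitrary, $\Phi$ is a weak equivalence. The main technical point is producing the long exact sequence (\ref{LES}) for $p_1$ even when $p_1$ is not literally a fibration; this is standard, either by replacing $p_1$ up to weak equivalence by its path-space fibration (and checking naturality) or by invoking Proposition~\ref{h-cart} to reduce the entire problem to the fiber comparison in the middle paragraph. Possible disconnectedness of $X$, $Y$, $Z$ causes no real difficulty, since each step of the argument is applied componentwise at a fixed basepoint.
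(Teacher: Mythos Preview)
The paper does not actually prove this proposition; it simply cites Hirschhorn's model category text.  Your direct Five Lemma argument is a valid and more elementary route, so the two approaches are genuinely different: the paper defers to the general machinery of homotopy invariance of homotopy limits in model categories, while you work by hand with the explicit model $\mcP$.

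One simplification worth noting: the projection $p_1\co \mcP \to X$ is already a fibration, since it is the pullback along $a$ of the path-space fibration $P_b \to Y$.  Its literal fiber over $x_0$ is therefore exactly $\hofib_{a(x_0)}(b)$, so the ``main technical point'' you flag at the end disappears and the identification in your middle paragraph is an equality rather than merely a weak equivalence.  With this observation your argument is just a map of fibration sequences over $f\co X\to X'$, and the two Five Lemma applications are entirely routine.  The only place requiring a moment's care is the bottom of the long exact sequence, where one must check bijectivity on $\pi_0$; surjectivity of $\Phi$ on $\pi_0$ uses that $f$ hits every component of $X'$ together with $\pi_0$--surjectivity of the fiber comparison, and the rest follows from the usual pointed-set version of the Five Lemma.
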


\subsection{Maps between classifying spaces and flat connections}$\label{flat}$

Consider a principal $U(n)$--bundle $E\srt{\pi} M$ over a closed, connected, aspherical, smooth manifold $M$ of dimension $d$.  Fix basepoints $e_0\in E$, $m_0\in M$ with $\pi(e_0) = m_0$.  We will (implicitly) use connections on $E$ of Sobolev class $L^p_k$ for some fixed constants $p\in \bbR$, $k\in \bbN$ with $\min (d/2, 4/3)  < p < \infty$, $k\geqs 2$, and $kp > m$.  Similarly, we use gauge transformations of $E$ of Sobolev class $L^p_{k+1}$, so that the based gauge group $\G_0 (E)$ acts continuously on the space $\fc(E)$ of flat unitary connections, with quotient homeomorphic to $\Hom(\pi_1 M, U(n))$ (this uses the Strong Uhlenbeck Compactness Theorem).  This quotient map may be identified with the map
$$\mH \co \fc (E) \maps \Hom(\pi_1 M, U(n)),$$
sending a flat connection $A$ to the holonomy representation of $A$ computed at $e_0\in E$.
A more detailed discussion  may be found in Ramras~\cite[Section 3]{Ramras-surface}.

Let $\Gamma = \pi_1 (M, m_0)$. 
We want to compute the homotopy fibers of the map $B$ studied in Section~\ref{TAS}.  Choose inverse homotopy equivalences $f\co M\to B\Gamma$ and $g\co B\Gamma\to M$ satisfying $f(m_0) = *$ and $g(*) = m_0$ (where $*\in B\Gamma$ is the canonical basepoint) and choose $\phi_E\co B\Gamma\to BU(n)$ such that
$g^*(E)\isom \phi_E^* (EU(n))$.

\begin{theorem}$\label{hofib}$
In the situation described above, there is a weak equivalence
$$ \hofib_{\phi_E} (B) \heq\flatc (E).$$
In particular, $\flatc (E)$ is independent of the choice of Sobolev norm up to weak equivalence.
\end{theorem}

\begin{proof}  We will construct a commutative diagram of the form
\begin{equation}\label{T-diag}
\xymatrix{\flatc (E) \ar[rr]^(.45)\T \ar[d]^\mH & & \Map^{U(n)}_* (E, EU(n)) \ar[d]^{q} \\
	 	\Hom^E (\Gamma, U(n)) \ar[r]^(.45){B} &  \Map_*^{g^*E} (B\Gamma, BU(n)) \ar[r]^(.52){f^*}_{\heq} 
					& \Map_*^E (M, BU(n)).
	     }
\end{equation}
Here $\Map_*^E$ and $\Map_*^{g^*E}$ denote the path components of  the based mapping spaces  consisting of maps classifying the bundles $E$ and $g^* E$ (respectively), and $\Map^{U(n)}_*$ is the space of based, $U(n)$--equivariant maps.
  The subspace $\Hom^E \subset \Hom$ consists of those representations inducing  bundles isomorphic to $g^* (E)$ (in other words, $\Hom^E$ is the inverse image of $\Map_*^E$ under $f^*\circ B$).   The map $q$ sends an equivariant map $E\to EU(n)$ to the induced map $M\to BU(n)$.
	
We now define the map $\T$, in analogy with Ramras~\cite[Proof of Theorem 3.4]{Ramras-stable-moduli}.  Given a flat connection $A\in \fc (E)$, let $\rho_A = \mH(A)$ be the holonomy representation of $A$ (computed at the basepoint $e_0\in E$).  By Lemma~\ref{fwf-conn}, if we choose a basepoint $\wt{m}_0\in \wt{M}$ lying over $m_0\in M$, the bundle 
$E_{\rho_A}  (M)  \to M$
acquires a basepoint $[\wt{m}_0, I_n]$ and admits a canonical flat connection $A_\rho$ whose holonomy, computed at $[\wt{m}_0, I_n]$, is precisely $\rho$. 
In addition, there is a canonical isomorphism of principal $U(n)$--bundles
$E\srm{\phi_A} E_{\rho_A} (M)$
such that $\phi_A (e_0) = [\wt{m}_0, I_n]$ and $\phi_A^* (A_\rho) = A$.

The homomorphism $\rho_A\co \Gamma \to U(n)$ induces a $U(n)$--equivariant map 
$$E(\rho_A)\co E\Gamma\maps EU(n)$$
between the simplicial models for these universal bundles, and $E(\rho_A)$ covers $B(\rho_A)$.
Next, $f \co M\xmaps{\heq} B\Gamma$ is covered by a based map $\wt{f} \co \wt{M} \to E\Gamma$ of principal $\Gamma$--bundles.
Using (\ref{u-diag}) and part (1) of Lemma~\ref{compatible}, we may now define maps of principal $U(n)$--bundles
$E_{\rho_A} (M) \to E_{\rho_A} (B\Gamma)\to EU(n)$ whose composition $\psi_A$ covers
 $B(\rho_A) \circ f$.  We define
$$\T (A) = \psi_A \circ \phi_A \in \Map^{U(n)} (E, EU(n)).$$
 For trivial bundles $E\isom M\cross U(n)$, continuity of  
$\T$ 
was proven in Ramras~\cite[Theorem 3.4]{Ramras-stable-moduli} and the general case is similar.

Now consider the following commutative diagram:
\begin{equation}\label{pb-comp}\xymatrix{\Hom^E (\Gamma, U(n)) \ar[r]^-{B} \ar[d]^\Id &  \Map_*^{g^*E} (B\Gamma, BU(n)) \ar[d]^{f^*} & \{\phi_E\} \ar[l] \ar[d]\\
		\Hom^E (\Gamma, U(n)) \ar[r]^-{f^* \circ B} \ar[d]^\Id &  \Map_*^E (M, BU(n)) \ar[d]^\Id &  \{\phi_E\circ f\} \ar[l] \ar[d]^i\\
		\Hom^E (\Gamma, U(n)) \ar[r]^-{f^* \circ B}  &  \Map_*^E (M, BU(n)) & \Map^{U(n)}_* (E, EU(n)) \ar[l]_{q}.
		}
\end{equation}
The map $i$ sends $\phi_E\circ f$ to  a map $\wt{\phi_E\circ f}\in \Map^{U(n)}_* (E, EU(n))$ covering $\phi_E\circ f\co M\to BU(n)$; such a map exists because (by the Bundle Homotopy Theorem)
$$(\phi_E \circ f)^* (EU(n)) = f^* (\phi_E^* (EU(n))) \isom f^* (g^* E) \isom E.$$
By Gottlieb~\cite[Theorem 5.6]{Gottlieb}, the space $\Map^{U(n)}_* (E, EU(n))$ is weakly  contractible, so all vertical maps 
in Diagram (\ref{pb-comp}) are weak equivalences and by Proposition~\ref{htpy-inv-fibers}, the induced maps between the homotopy pullbacks of the rows are weak equivalences as well.  
By (\ref{comp}), the homotopy pullback of the top row is just $\hofib_{\phi_E} (B)$, and the bottom row is the square (\ref{T-diag}) (minus its upper left corner).  To complete the proof, then, it will suffice to show that (\ref{T-diag}) is homotopy cartesian.

The vertical maps in  (\ref{T-diag}) are fibrations: for $\mH$, see Mitter--Viallet~\cite{Mitter-Viallet} or Fine--Kirk--Klausen~\cite{FKK},
and for $q$, see Gottlieb~\cite[Proposition 5.3]{Gottlieb}.  The induced map between the  fibers of the vertical maps is the inclusion
of the based Sobolev  gauge group into the continuous gauge group.  These spaces are spaces of sections of the adjoint bundle $E\cross_{U(n)} U(n)^{\mathrm{Ad}}$, so this inclusion map is a weak equivalence by general approximation results for spaces of sections.  By Proposition~\ref{h-cart},  Diagram (\ref{T-diag}) is homotopy cartesian.  (For fibrations, the inclusion of the fiber into the homotopy fiber is a homotopy equivalence~\cite[Proposition 4.65]{Hatcher}.)
\end{proof}

\begin{proof}[Proof of Corollary~\ref{fh}]  This follows from Theorem~\ref{cokernel-AS} together with the long-exact sequence in homotopy associated to the (homotopy) fiber sequence
$$\fc(E) \maps \Hom^E (\Gamma, U(n))\srm{B}  \Map_*^{g^*E} (B\Gamma, BU(n))$$
established in Theorem~\ref{hofib}.  Note that for the statement regarding $\pi_0 (\fc(E))$, we use the fact that in a fibration sequence $(F, f_0)\to (E, f_0)\srt{p} (B, b_0)$, if $\partial (\gamma) = \partial (\eta)$ for some $\gamma, \eta\in \pi_1 (B, b_0)$, then 
$\gamma^{-1} \eta \in \Img \left(\pi_1 (E, f_0) \to \pi_1 (B, b_0)\right)$.
\end{proof}

\begin{remark} Choosing a basepoint $A_0\in \fc(E)$, Theorem~\ref{hofib} yields a long exact sequence in homotopy, whose boundary map
is the composition  
\begin{eqnarray*}
 \pi_* \bMap (B\Gamma, BU(n)) \mapsmo \limits_{\isom}^{f^*}  \pi_* \bMap (M, BU(n)) \hspace{1.8in}\\
\hspace{1.3in} \mapsmo  \limits_{\isom}^{\partial_q} \pi_{*-1} \Map^{U(n)}_* (E, U(n)) \mapsmo \limits_{\isom}^{i_*^{-1}}  \pi_{*-1} \G_0 (E) \srm{j_*} \pi_{*-1} \fc (E),
\end{eqnarray*}
where $\partial_q$ is the boundary map for $q$  (see (\ref{T-diag})), 
$i$ is the natural inclusion, and $j$ is the inclusion of the orbit of $A_0$.
Hence the classes in $\pi_* \fc (E)$ described in Theorem~\ref{fh} are all in the image of
$\pi_* \G_0 (E) \srm{j_*} \pi_* \fc(E)$.

Our methods can also be used to identify the boundary map for the holonomy fibration $\mcH$ with the map $B_*$, via the isomorphisms 
\begin{eqnarray*} \pi_{*-1} \G_0 (E) \mapsmo \limits^{i_*}_{\isom}   \pi_{*-1} \Map^{U(n)}_* (E, U(n)) \mapsmo \limits_{\isom}^{\partial_q^{-1}}  \pi_{*} \Map^E_* (M, BU(n))\\
\mapsmo \limits^{\isom} \pi_{*} \Map^{g^*E}_* (B\Gamma, BU(n)).
\end{eqnarray*}

\end{remark}


\begin{thebibliography}{10}

\bibitem{Bochnak-Kucharz}
J.~Bochnak and W.~Kucharz.
\newblock Real algebraic morphisms represent few homotopy classes.
\newblock {\em Math. Ann.}, 337(4):909--921, 2007.

\bibitem{Dummit-Foote}
David~S. Dummit and Richard~M. Foote.
\newblock {\em Abstract algebra}.
\newblock John Wiley \& Sons Inc., Hoboken, NJ, third edition, 2004.

\bibitem{Durfee}
Alan~H. Durfee.
\newblock Neighborhoods of algebraic sets.
\newblock {\em Trans. Amer. Math. Soc.}, 276(2):517--530, 1983.

\bibitem{FKK}
B.~Fine, P.~Kirk, and E.~Klassen.
\newblock A local analytic splitting of the holonomy map on flat connections.
\newblock {\em Math. Ann.}, 299(1):171--189, 1994.

 \bibitem{Gaifullin-realization-2}
A.~A. Ga{\u\i}fullin.
\newblock The manifold of isospectral symmetric tridiagonal matrices and the
  realization of cycles by aspherical manifolds.
\newblock {\em Tr. Mat. Inst. Steklova}, 263(Geometriya, Topologiya i
  Matematicheskaya Fizika. I):44--63, 2008.  
  Translation in
\newblock {\em Proc. Steklov Inst. Math.} 263 (2008), no. 1, 38Ð-56.

\bibitem{Gaifullin-realization-1}
A.~A. Ga{\u\i}fullin.
\newblock Realization of cycles by aspherical manifolds.
\newblock {\em Uspekhi Mat. Nauk}, 63(3(381)):157--158, 2008.
Translation in
\newblock {\em Russian Math. Surveys} 63 (2008), no. 3, 562Ð-564.

\bibitem{Ghiloni}
Riccardo Ghiloni.
\newblock Second order homological obstructions on real algebraic manifolds.
\newblock {\em Topology Appl.}, 154(17):3090--3094, 2007.

\bibitem{Gottlieb}
Daniel~Henry Gottlieb.
\newblock Applications of bundle map theory.
\newblock {\em Trans. Amer. Math. Soc.}, 171:23--50, 1972.

\bibitem{Grothendieck-Chern-classes}
A.~Grothendieck.
\newblock Classes de {C}hern et repr\'esentations lin\'eaires des groupes
  discrets.
\newblock In {\em Dix {E}xpos\'es sur la {C}ohomologie des {S}ch\'emas}, pages
  215--305. North-Holland, Amsterdam, 1968.

\bibitem{GP}
Victor Guillemin and Alan Pollack.
\newblock {\em Differential topology}.
\newblock Prentice-Hall Inc., Englewood Cliffs, N.J., 1974.

\bibitem{Hatcher}
Allen Hatcher.
\newblock {\em Algebraic topology}.
\newblock Cambridge University Press, Cambridge, 2002.

\bibitem{Hirschhorn}
Philip~S. Hirschhorn.
\newblock {\em Model categories and their localizations}, volume~99 of {\em
  Mathematical Surveys and Monographs}.
\newblock American Mathematical Society, Providence, RI, 2003.

\bibitem{Hudson-Zeeman}
J.~F.~P. Hudson and E.~C. Zeeman.
\newblock On regular neighbourhoods.
\newblock {\em Proc. London Math. Soc. (3)}, 14:719--745, 1964.

\bibitem{Kollar}
J{\'a}nos Koll{\'a}r.
\newblock {\em Lectures on resolution of singularities}, volume 166 of {\em
  Annals of Mathematics Studies}.
\newblock Princeton University Press, Princeton, NJ, 2007.

\bibitem{Kucharz-Maciejewski}
Wojciech Kucharz and {\L}ukasz Maciejewski.
\newblock Complexification and homotopy.
\newblock {\em Homology Homotopy Appl.}, 16(1):159--165, 2014.

\bibitem{Mitter-Viallet}
P.~K. Mitter and C.-M. Viallet.
\newblock On the bundle of connections and the gauge orbit manifold in
  {Y}ang--{M}ills theory.
\newblock {\em Comm. Math. Phys.}, 79(4):457--472, 1981.

\bibitem{Moise}
Edwin~E. Moise.
\newblock {\em Geometric topology in dimensions {$2$} and {$3$}}.
\newblock Springer-Verlag, New York, 1977.
\newblock Graduate Texts in Mathematics, Vol. 47.

\bibitem{Ramras-excision}
Daniel~A. Ramras.
\newblock Excision for deformation {$K$}-theory of free products.
\newblock {\em Algebr. Geom. Topol.}, 7:2239--2270, 2007.

\bibitem{Ramras-surface}
Daniel~A. Ramras.
\newblock Yang-{M}ills theory over surfaces and the {A}tiyah-{S}egal theorem.
\newblock {\em Algebr. Geom. Topol.}, 8(4):2209--2251, 2008.

\bibitem{Ramras-stable-moduli}
Daniel~A. Ramras.
\newblock The stable moduli space of flat connections over a surface.
\newblock {\em Trans. Amer. Math. Soc.}, 363(2):1061--1100, 2011.

\bibitem{Spanier}
Edwin~H. Spanier.
\newblock {\em Algebraic topology}.
\newblock Springer-Verlag, New York, 1981.
\newblock Corrected reprint.

\bibitem{Thom}
Ren{\'e} Thom.
\newblock Quelques propri\'et\'es globales des vari\'et\'es diff\'erentiables.
\newblock {\em Comment. Math. Helv.}, 28:17--86, 1954.

\end{thebibliography}
\end{document}